\newtheorem{thm}{Theorem}[section]
\newtheorem{lem}[thm]{Lemma}
\newtheorem{prop}[thm]{Proposition}
\newtheorem{defn}[thm]{Definition}
\newtheorem{ex}[thm]{Example}
\def\Irr{\mathbf{Irr}}
\def\cG{\mathcal{G}}
\def\I{\mathfrak{I}}
\def\cB{\mathcal{B}}
\def\fB{\mathfrak{B}}
\def\fL{\mathfrak{L}}
\def\cT{\mathcal{T}}
\def\fs{\mathfrak{s}}
\def\fo{\mathfrak{o}}
\def\Gal{\mathrm{Gal}}
\def\Hom{\mathrm{Hom}}
\def\Ind{\mathrm{Ind}}
\def\temp{\mathrm{temp}}
\def\St{\mathrm{St}}
\def\SO{\mathrm{SO}}
\def\SU{\mathrm{SU}}
\def\PGL{\mathrm{PGL}}
\def\PSU{\mathrm{PSU}}
\def\GL{\mathrm{GL}}
\def\Aut{\mathrm{Aut}}
\def\Nrd{\mathrm{Nrd}}
\def\F{\mathbb{F}}
\def\R{\mathbb{R}}
\def\C{\mathbb{C}}
\def\Z{\mathbb{Z}}
\def\T{\mathbb{T}}
\def\Q{\mathbb{Q}}
\def\SL{\mathrm{SL}}
\def\PSL{\mathrm{PSL}}
\def\im{\mathrm{im}}
\def\Prim{\mathbf{Prim}}
\def\W{\mathbf{W}}
\def\i{\mathbf{i}}
\def\j{\mathbf{j}}
\def\k{\mathbf{k}}
\def\q{/\!/}
\begin{document}

\title[$L$-packets and depth]{$L$-packets and depth for $\SL_2(K)$  with $K$ a local function field of characteristic $2$}

\author[S. Mendes]{Sergio Mendes}
\address{ISCTE - Lisbon University Institute\\    Av. das For\c{c}as Armadas\\     1649-026, Lisbon\\   Portugal}
\email{sergio.mendes@iscte.pt} 
\author[R. Plymen]{Roger Plymen}
\address{School of Mathematics, Southampton University, Southampton SO17 1BJ,  England 
\emph{and} School of Mathematics, Manchester University, Manchester M13 9PL, England}
\email{r.j.plymen@soton.ac.uk \quad plymen@manchester.ac.uk}

\keywords{Representation theory, L-packets, depth}

\date{\today}
\maketitle

\begin{abstract}  Let $\cG = \SL_2(K)$ with $K$ a local function field of characteristic $2$.   We review  Artin-Schreier theory for the
field $K$,  and show that this leads to a parametrization of certain $L$-packets in the smooth dual of $\cG$.  We relate this to a recent geometric conjecture.
The $L$-packets in the principal series are parametrized by quadratic extensions, and the supercuspidal $L$-packets of cardinality $4$ are parametrised by biquadratic extensions. Each supercuspidal packet of cardinality $4$ is accompanied by a singleton packet for $\SL_1(D)$.   We compute the depths of the irreducible constituents of all these $L$-packets for $\SL_2(K)$ and its inner form
$\SL_1(D)$.   

\end{abstract}

\section{Introduction} The special linear group $\SL_2$ has been a mainstay of representation theory for at least 45 years, see \cite{GGPS}. In that book, the authors show how the unitary irreducible representations of $\SL_2(\R)$ and $\SL_2(\Q_p)$ can be woven together in the context of automorphic forms. This comes about in the following way.
The classical notion of a cusp form $f$ in the upper half plane leads first to the concept of a cusp form on the
adele group of $\GL_2$ over $\Q$, and thence to the idea of an automorphic cuspidal representation $\pi_f$ of the adele group of $\GL_2$.
We recall that the adele group of $\GL_2$ is the restricted product of the local groups $\GL_2(\Q_p)$ where $p$ is a place of $\Q$.  If $p$ is infinite then $\Q_p$ is the real field $\R$; if $p$ is finite then $\Q_p$ is the $p$-adic field.  The unitary representation $\pi_f$ may be expressed as $\otimes \pi_p$ with one local representation for each local group $\GL_2(\Q_p)$.   It is this way that the unitary representation theory of groups such as $\GL_2(\Q_p)$ enters into the modern theory of automorphic forms.
\smallskip

Let $X$ be a smooth projective curve over $\F_q$. Denote by $F$ the field $\F_q(X)$ of rational functions on $X$. For any closed point $x$ of $X$ we denote by $F_x$ the completion of $F$ at $x$ and by $\fo_x$ its ring of integers. If we choose a local coordinate $t_x$ at $x$ (i.e., a rational function on $X$ which vanishes at $x$ to order one), then we obtain isomorphisms $F_x \simeq \F_{q_x} ((t_x))$ and $\fo_x \simeq  \F_{q_x} [[t_x]]$, where $\F_{q_x}$ 	is the residue field of $x$; in general, it is a finite extension of $\F_q$ containing $q_x = q^{deg(x)}$ elements.
Thus, we now have a \emph{local function field} attached to each point of X.
\smallskip

With all this in the background, it seems natural to us to study the representation theory of $\SL_2(K)$ with $K$ a local function field.   The case when $K$ has characteristic $2$ has many special features -- and we focus on this case in this article.   A local function field $K$ of characteristic $2$ is of the form
$K= \F_q((t))$,  the field of Laurent series with
coefficients in $\F_q$, with $q=2^f$. This example is particularly
interesting because there are countably many quadratic extensions of
$\F_q((t))$.
\smallskip

Artin-Schreier theory is a branch of Galois theory, and more specifically is a positive characteristic analogue of Kummer theory, for Galois extensions of degree equal to the characteristic $p$.  Artin and Schreier (1927) introduced Artin-Schreier theory for extensions of prime degree $p$, and Witt (1936) generalized it to extensions of prime power degree $p^n$.
If K is a field of characteristic $p$, a prime number, any polynomial of the form
\[X^p - X + \alpha
\]
for $\alpha \in K$, is called an Artin-Schreier polynomial. When $\alpha$ does not lie in the subset  $\{ y \in K \, | \, y=x^p-x \; \mbox{for } x \in K \}$, this polynomial is irreducible in $K[X]$, and its splitting field over $K$ is a cyclic extension of $K$ of degree p. This follows since for any root $\beta$, the numbers
$\beta  + i$, for $1\le i\le p$, form all the root -- by Fermat's little theorem -- so the splitting field is  $K(\beta)$.
Conversely, any Galois extension of $K$ of degree $p$ equal to the characteristic of $K$ is the splitting field of an Artin-Schreier polynomial. This can be proved using additive counterparts of the methods involved in Kummer theory, such as Hilbert's theorem $90$ and additive Galois cohomology. These extensions are called Artin-Schreier extensions.
\smallskip

For the moment, let $F$ be a local nonarchimedean field with odd residual characteristic.    The $L$-packets for $\SL_2(F)$ are classified in the paper 
\cite{LR} by Lansky-Rhaguram.  They comprise: the principal series $L$-packets $\xi_E = \{\pi_E^1, \pi_E^2\}$ where $E/F$ is a quadratic extension; the unramified supercuspidal $L$-packet of cardinality $4$; and the supercuspidal $L$-packets of cardinality $2$.   

We now revert to the case of a local function field $K$ of characteristic $2$.    We consider $\SL_2(K)$.     Drawing on the accounts in \cite{Da,Th1,Th2}, we review  Artin-Schreier theory, adapted to the local function field $K$, with special emphasis on the quadratic extensions of $K$.       

The $L$-packets in the principal series of $\SL_2(K)$ are parametrized by quadratic extensions, and the supercuspidal $L$-packets of cardinality $4$ are parametrised by biquadratic extensions $L/K$. There are countably many such supercuspidal $L$-packets.   In this article, we do not consider supercuspidal $L$-packets of cardinality $2$.   

The concept of \emph{depth} can be traced back to the concept of \emph{level} of a character.  Let $\chi$ be a non-trivial character of $K^{\times}$.   The level of $\chi$ is the least integer $n \geq 0$ such that $\chi$ is trivial on the higher unit group $U_K^{n+1}$, see \cite[p.12]{BH}.  The depth of a Langlands parameter $\phi$ is defined as follows.   Let $r$ be a real number, $r \geq 0$,  let $\Gal (K_s / K)^r$ be the $r$-th ramification subgroup
of the absolute Galois group of $K$. Then the depth of $\phi$ is the smallest number
$d(\phi) \geq 0$ such that $\phi$ is trivial on $\Gal (K_s/K)^r$ for all $r > d(\phi)$.

The \emph{depth} $d(\pi)$ of an irreducible $\cG$-representation $\pi$ was defined by Moy and
Prasad \cite{MoPr1,MoPr2}  in terms of filtrations $P_{x,r} (r \in \R_{\geq 0})$ of
the parahoric subgroups $P_x \subset \cG$.

Let $\cG = \SL_2(K)$.   Let $\Irr(\cG)$ denote the smooth dual of $\cG$.   Thanks to a recent article \cite{ABPS1}, we have,
 for every Langlands parameter $\phi \in \Phi (\cG)$
with L-packet $\Pi_\phi (\cG) \subset \Irr (\cG)$
\begin{equation}\label{D}
d(\phi) = d(\pi) \quad \text{ for all } \pi \in \Pi_\phi (\cG) .
\end{equation}

The equation (\ref{D}) is a big help in the computation of the depth $d(\pi)$.   To each biquadratic extension $L/K$, there is attached a Langlands parameter 
$\phi = \phi_{L/K}$, and an $L$-packet $\Pi_{\phi}$ of cardinality $4$.   The  depth of the parameter $\phi_{L/K}$ depends on the extension $L/K$.   More precisely, the  numbers $d(\phi)$ depend on the breaks in the upper ramification filtration of the Galois group 
\[
\Gal(L/K) = \Z/2\Z \times \Z/2\Z.
\]
For certain extensions $L/K$ the allowed depths can be any odd number $1,3,5,7, \ldots$. For the other extensions $L/K$, the allowed depths are $3,5,7,9, \ldots$.
   Accordingly, the depth of each irreducible supercuspidal representation $\pi$ in the packet $\Pi_{\phi}$ is given by the formula
\begin{align}\label{d}
d(\pi) = 2n+1
\end{align}
where $n = 0,1,2,3,\ldots$ or $1,2,3,4,\ldots$ depending on $L/K$.
Let $D$ be a central division algebra of dimension $4$ over $K$.   The parameter $\phi$ is relevant for the inner form $\SL_1(D)$, which admits singleton $L$-packets, and  the depths are given by the formula (\ref{d}).

  This contrasts with the case of $\SL_2(\Q_p)$ with $p>2$.  Here there is a unique biquadratic extension $L/K$, and a unique tamely ramified parameter $\phi : \Gal(L/K) \to \SO_3(\R)$ of depth zero.

We move on to consider the geometric conjecture in \cite{ABPS}.   Let $\fB(\cG)$ denote the Bernstein spectrum of $\cG$, let $\fs \in \fB(\cG)$, and let 
$T^{\fs}, W^{\fs}$ denote the complex torus, finite group, attached by Bernstein to $\fs$.   For more details at this point, we refer the reader to \cite{R}.   
The Bernstein decomposition  provides us, inter alia,  with the following data: a canonical disjoint union
\[
\Irr(\cG) = \bigsqcup \Irr(\cG)^{\fs}
\]
and, for each $\fs \in \fB(\cG)$, a finite-to-one surjective map
\[
\Irr(\cG)^{\fs} \to T^{\fs}/W^{\fs}
\]
onto the quotient variety $T^\fs/W^\fs$.   
The geometric conjecture in \cite{ABPS} amounts to a refinement of these statements.   The refinement comprises the assertion that  we have a \emph{bijection}
\begin{align}\label{bij}
\Irr(\cG)^s \simeq T^{\fs}\q W^{\fs}
\end{align}
where $T^{\fs}\q W^{\fs}$ is the \emph{extended quotient} of the torus $T^{\fs}$ by the finite group $W^{\fs}$.   If the action of $W^\fs$ on $T^\fs$ is free, then the extended quotient is equal to the ordinary quotient $T^\fs/W^\fs$.  If the action is not free, then the extended quotient is a finite disjoint union of quotient varieties, one of which is the ordinary quotient.   The bijection (\ref{bij}) is subject to certain constraints, itemised in \cite{ABPS}.

In the case of $\SL_2$, the torus $T^{\fs}$ is of dimension $1$, and the finite group $W^{\fs}$ is either $1$ or $\Z/2\Z$.   So, in this context, the content of the conjecture is rather modest: but a proof is required, and such a proof is duly given in \S 7.

We thank Anne-Marie Aubert for her careful reading of the manuscript, and for alerting us to the article  \cite{LR}.   
Thanks also to Chandan Dalawat for a valuable exchange of emails and for the references \cite{Da2,Da}.

\section{Artin-Schreier theory}

Let $K$ be a local field with positive characteristic $p$, containing the $n$-th roots of unity $\zeta_n$. The cyclic
extensions of $K$ whose degree $n$ is coprime with $p$ are described
by Kummer theory. It is well known that any cyclic
extension $L/K$ of degree $n$, $(n,p)=1$, is generated by a root
$\alpha$ of an irreducible polynomial $x^n-a\in K[x]$. We fix an algebraic closure $\overline{K}$ of $K$ and a separable closure
$K^s$ of $K$ in $\overline{K}$.
If $\alpha\in K^s$ is a root of $x^n-a$ then $K(\alpha)/K$ is a
cyclic extension of degree $n$ and is called a Kummer extension of
$K$.

Artin-Schreier theory aims to describe cyclic extensions of degree equal to
or divisible by $ch(K)=p$. It is therefore an analogue of Kummer theory, where the
role of the polynomial $x^n-a$ is played by $x^n-x-a$. Essentially,
every cyclic extension of $K$ with degree $p=ch(K)$ is generated by
a root $\alpha$ of $x^p-x-a\in K[x]$.

 Let $\wp$ denote the Artin-Schreier endomorphism of the additive group
$K^s$ \cite{Ne}:
\[
\wp : K^s \to K^s, \quad x \mapsto x^p-x.
\]

Given $a\in K$ denote by $K(\wp^{-1}(a))$ the extension $K(\alpha)$,
where $\wp(\alpha)=a$ and $\alpha\in K^s$. We have the following characterization of finite cyclic Artin-Schreier extensions of degree $p$:

\begin{thm}
\begin{itemize}
\item[$(i)$] Given $a\in K$, either $\wp(x)-a\in K[x]$ has one root in $K$ in which case it has all the $p$ roots are in $K$, or is irreducible.
\item[$(ii)$] If $\wp(x)-a\in K[x]$ is irreducible then $K(\wp^{-1}(a))/K$ is a cyclic extension of degree $p$, with $\wp^{-1}(a)\subset K^s$.
\item[$(iii)$] If $L/K$ be a finite cyclic extension of degree $p$, then $L=K(\wp^{-1}(a))$, for some $a\in K$.
\end{itemize}
\end{thm}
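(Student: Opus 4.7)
The plan is to treat (i), (ii), (iii) in turn, using the elementary observation (already noted in the introduction) that if $\beta$ is any root of $f(x)=\wp(x)-a=x^p-x-a$ in $K^s$, then the full set of roots is $\{\beta,\beta+1,\ldots,\beta+(p-1)\}$, because $(\beta+i)^p-(\beta+i)-a=\beta^p-\beta-a+(i^p-i)=0$ by Fermat's little theorem. This gives (i) for free: if any root lies in $K$, then all $p$ roots lie in $K$; conversely, if no root lies in $K$, then for any monic factor $g(x)\in K[x]$ of $f(x)$ of degree $d$ with $1\le d<p$, the coefficient of $x^{d-1}$ in $g$ is $-(d\beta+m)$ for some integer $m$, and since $0<d<p$ this forces $\beta\in K$, a contradiction. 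So $f$ is irreducible.

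For (ii), assume $f$ is irreducible. Then $[K(\wp^{-1}(a)):K]=\deg f=p$, and since $K(\beta)$ contains all roots $\beta+i$, it is the splitting field of a separable polynomial, hence Galois. Any $\sigma\in\Gal(K(\beta)/K)$ permutes the roots, so $\sigma(\beta)=\beta+i_\sigma$ for a unique $i_\sigma\in\F_p$. The map $\sigma\mapsto i_\sigma$ is a group homomorphism $\Gal(K(\beta)/K)\to\F_p$, and it is injective (its kernel fixes $\beta$, hence everything). Comparing cardinalities, it is an isomorphism, so the Galois group is cyclic of order $p$.

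The substantive part is (iii), and this is where additive Hilbert 90 enters; this is the main obstacle and it is the part that deserves the most care. Let $L/K$ be cyclic of degree $p$ and let $\sigma$ generate $\Gal(L/K)$. The idea is to produce $\beta\in L$ with $\sigma(\beta)=\beta+1$; then $a:=\beta^p-\beta$ satisfies $\sigma(a)=(\beta+1)^p-(\beta+1)=\beta^p-\beta=a$, so $a\in K$, and $\beta\notin K$ forces $L=K(\beta)=K(\wp^{-1}(a))$. To produce such a $\beta$, I would invoke the additive form of Hilbert 90: $H^1(\Gal(L/K),L)=0$, which in cyclic form says that an element $c\in L$ has the shape $\sigma(\beta)-\beta$ if and only if $\mathrm{Tr}_{L/K}(c)=0$. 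Apply this with $c=1$: since $\mathrm{Tr}_{L/K}(1)=p\cdot 1=0$ in characteristic $p$, we obtain $\beta\in L$ with $\sigma(\beta)-\beta=1$, as required.

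For completeness I would either cite the additive Hilbert 90 directly or sketch the standard derivation from the normal basis theorem (or from surjectivity of the trace plus a telescoping sum $\beta=\sum_{i=0}^{p-1}(\text{partial sums of }\sigma^j(\theta))$ for some $\theta$ with $\mathrm{Tr}(\theta)=1$); this is where all three statements are knit together, since the $\beta$ so produced is automatically a root of an irreducible Artin--Schreier polynomial by (i)--(ii).
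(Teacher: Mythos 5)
Your proof is correct and follows essentially the same route the paper indicates: it sketches exactly this argument in the introduction (roots of the form $\beta+i$ via Fermat's little theorem for (i)--(ii), additive Hilbert 90 for (iii)) and then defers to \cite[p.~34]{Th1} rather than proving the theorem itself. Your write-up simply supplies the details the paper leaves to the reference, including the small but necessary observation that a degree-$d$ monic factor with $1\le d<p$ would force $\beta\in K$ since $d$ is invertible in $\F_p$.
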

(See \cite[p.34]{Th1} for more details.)

\bigskip

We fix now some notation. $K$ is a local field with characteristic $p>1$ with finite
residue field $k$. The field of constants $k=\F_q$ is a finite extension of $\F_p$, with degree $[k:\F_p]=f$ and $q=p^f$.

Let $\mathfrak{o}$ be the ring of integers in $K$ and denote by $\mathfrak{p}\subset\mathfrak{o}$ the (unique) maximal ideal of $\mathfrak{o}$.
This ideal is principal and any generator of $\mathfrak{p}$ is called a uniformizer. A choice of uniformizer $\varpi\in\mathfrak{o}$ determines isomorphisms $K\cong\F_q((\varpi))$, $\mathfrak{o}\cong\F_q[[\varpi]]$ and $\mathfrak{p}=\varpi\mathfrak{o}\cong\varpi\F_q[[\varpi]]$.

A normalized valuation on $K$ will be denoted by $\nu$, so that $\nu(\varpi)=1$ and $\nu(K)=\Z$. The group of units is denoted by $\mathfrak{o}^{\times}$.

\subsection{The Artin-Schreier symbol}


Let $L/K$ be a finite Galois extension. Let $N_{L/K}$ be the norm map and denote by $\Gal(L/K)^{ab}$ the abelianization of $\Gal(L/K)$. The reciprocity map is a group isomorphism
\begin{equation}
K^{\times}/N_{L/K} L^{\times}\stackrel{\simeq}\longrightarrow \Gal(L/K)^{ab}.
\end{equation}\label{Artin symbol}
The Artin symbol is obtained by composing the reciprocity map with the canonical morphism $K^{\times}\rightarrow K^{\times}/N_{L/K} L^{\times}$
\begin{equation}
b\in K^{\times} \mapsto(b,L/K)\in \Gal(L/K)^{ab}.
\end{equation}

From the Artin symbol we obtain a pairing
\begin{equation}\label{A-S pairing KxK*}
K\times K^{\times}\longrightarrow\mathbb{Z}/p\mathbb{Z} , (a,b)\mapsto(b,L/K)(\alpha)-\alpha,
\end{equation}
where $\wp(\alpha)=a$, $\alpha\in K^s$ and $L=K(\alpha)$.

\begin{defn}
Given $a\in K$ and $b\in K^{\times}$, the Artin-Schreier symbol is defined by
\[
[a,b)=(b,L/K)(\alpha)-\alpha.
\]
\end{defn}

The Artin-Schreier symbol is a bilinear map satisfying the following properties, see \cite[p.341]{Ne}:

\begin{equation}
 [a_1+a_2,b)=[a_1,b)+[a_2,b);
\end{equation}
\begin{equation}
 [a,b_1b_2)=[a,b_1)+[a,b_2);
\end{equation}
\begin{equation}\label{Artin-Schreier symbol (iii)}
 [a,b)=0, \forall a\in K \Leftrightarrow b\in N_{L/K}L^{\times}, L=K(\alpha) \textrm{ and }\wp(\alpha)=a;
\end{equation}
\begin{equation}\label{Artin-Schreier symbol (iv)}
 [a,b)=0, \forall b\in K^{\times} \Leftrightarrow a\in\wp(K).
\end{equation}

\subsection{The groups $K/\wp(K)$ and $K^{\times}/K^{\times p}$}

In this section we recall some properties of the groups $K/\wp(K)$ and $K^{\times}/K^{\times p}$ and use them to redefine the pairing (\ref{A-S pairing KxK*}). Dalawat \cite{Da2,Da} interprets $K/\wp(K)$ and $K^{\times}/K^{\times p}$ as $\F_p$-spaces. This interpretation will be particularly useful in $\S 4$.

Consider the additive group $K$. By \cite[Proposition 11]{Da}, the $\F_p$-space $K/\wp(K)$ is countably infinite . Hence, $K/\wp(K)$ is infinite as a group.
\begin{prop}\label{group K/PK}
$K/\wp(K)$ is a discrete abelian torsion group.
\end{prop}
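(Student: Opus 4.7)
The plan is to verify the three asserted properties of $K/\wp(K)$ separately, with discreteness being the main content.

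The abelian claim is immediate: $K$ is an abelian group under addition, and $\wp$ is an additive endomorphism (since in characteristic $p$ we have $(x+y)^p = x^p + y^p$), so $\wp(K)$ is a subgroup and the quotient inherits an abelian group structure. The torsion claim is equally quick from the characteristic: every $a\in K$ satisfies $p\cdot a = 0$, so every class in $K/\wp(K)$ has order dividing $p$, making $K/\wp(K)$ an $\F_p$-vector space and in particular a torsion group.

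The substance of the proof lies in the discreteness statement. I would give $K/\wp(K)$ the quotient topology coming from the $\mathfrak{p}$-adic topology on $K$. Since $\wp(K)$ is a subgroup, it suffices to show that $\wp(K)$ is open, and for that it suffices to show that $\wp(K)$ contains an open neighborhood of $0$, for example the maximal ideal $\mathfrak{p}$. The key step is the explicit construction of preimages: given $x\in\mathfrak{p}$, I set
\[
y \;=\; -\sum_{i\geq 0} x^{p^{i}}.
\]
Because $\nu(x)\geq 1$, the valuations $\nu(x^{p^{i}}) = p^{i}\nu(x)$ tend to infinity, so the series converges in the complete field $K$. A direct computation, using that $y^{p} = -\sum_{i\geq 0} x^{p^{i+1}}$ is obtained from the series for $y$ by dropping the initial term $-x$, gives
\[
\wp(y) \;=\; y^{p} - y \;=\; \Bigl(-\sum_{i\geq 1} x^{p^{i}}\Bigr) - \Bigl(-\sum_{i\geq 0} x^{p^{i}}\Bigr) \;=\; x.
\]
Thus $\mathfrak{p}\subseteq \wp(K)$, the subgroup $\wp(K)$ is open in $K$, and the quotient $K/\wp(K)$ is discrete.

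The only nontrivial ingredient is the telescoping series trick, and the main thing to be careful about is checking convergence and the cancellation identity. Once $\mathfrak{p}\subseteq \wp(K)$ is in hand, discreteness is automatic, and combined with the easy abelian and torsion statements this completes the proposition.
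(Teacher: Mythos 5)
Your proof is correct and follows the same overall route as the paper: show $\mathfrak{p}\subseteq\wp(K)$, conclude $\wp(K)$ is an open subgroup and the quotient is discrete, and observe that the characteristic-$p$ hypothesis gives the abelian and $p$-torsion claims for free. The one difference is in how $\mathfrak{p}\subseteq\wp(K)$ is established: the paper passes through the decomposition $\mathfrak{o}=\mathbb{F}_q+\mathfrak{p}$ and then cites Dalawat's Lemma~8 for the fact that $\wp$ restricts to an isomorphism of $\mathfrak{p}$, whereas you prove the needed surjectivity directly via the convergent telescoping series $y=-\sum_{i\geq 0}x^{p^i}$. Your version is self-contained (it effectively re-proves the cited lemma) and dispenses with the unnecessary $\mathfrak{o}=\mathbb{F}_q+\mathfrak{p}$ step; the paper's version is shorter at the cost of an external reference. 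One small point worth noting explicitly: the identity $(-1)^p=-1$ holds in characteristic $p$ for all primes $p$ (including $p=2$, where $-1=1$), which is what justifies pulling the sign through the $p$-th power in your computation of $y^p$.
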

\begin{proof}
The ring of integers decomposes as a (direct) sum
\[
\mathfrak{o}=\mathbb{F}_q+\mathfrak{p}
\]
and we have
\[
\wp(\mathfrak{o})=\wp(\mathbb{F}_q)+\wp(\mathfrak{p}).
\]
The restriction $\wp:\mathfrak{p}\rightarrow\mathfrak{p}$ is an isomorphism, see \cite[Lemma 8]{Da}. Hence,
\[
\wp(\mathfrak{o})=\wp(\mathbb{F}_q)+\mathfrak{p}
\]
and $\mathfrak{p}\subset\wp(K)$. It follows that $\wp(K)$ is an open subgroup of $K$ and $K/\wp(K)$ is discrete. Since $\wp(K)$ is annihilated by $p$, $K/\wp(K)$ is a torsion group.
\end{proof}

Now we concentrate on the multiplicative group $K^{\times}$. For any $n>0$, let $U_n$ be the kernel of the reduction map from $\mathfrak{o}^{\times}$ to $(\mathfrak{o}/\mathfrak{p}^n)^{\times}$. In particular, $U_1=ker(\mathfrak{o}^{\times}\rightarrow k^{\times})$. The $U_n$ are $\Z_p$-modules , because they are commutative pro-p-groups. By \cite[Proposition 20]{Da2}, the $\Z_p$-module $U_1$ is not finitely generated. As a consequence, $K^{\times}/K^{\times p}$ is infinite, see \cite[Corollary 21]{Da2}. The next result gives a characterization of the topological group $K^{\times}/K^{\times p}$.

\begin{prop}\label{K*/K^*p is profinite}
$K^{\times}/K^{\times p}$ is a profinite abelian $p$-torsion group.
\end{prop}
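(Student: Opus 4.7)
The plan is to reduce to a statement about principal units. Fix a uniformizer $\varpi$, giving the topological decomposition $K^\times = \varpi^\Z \times \mathfrak{o}^\times$, and then $\mathfrak{o}^\times = \F_q^\times \times U_1$ via the inclusion of constants $\F_q \hookrightarrow \mathfrak{o}$.  Raising to the $p$-th power respects this product structure, and on $\F_q^\times$ it is a bijection because $|\F_q^\times|=q-1$ is coprime to $p$. Hence
\[
K^\times / K^{\times p} \;\cong\; \Z/p\Z \;\times\; U_1/U_1^p .
\]
The quotient is therefore abelian (as a quotient of an abelian group) and $p$-torsion (by construction), leaving only the profiniteness to establish.

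For the profiniteness of $U_1/U_1^p$ I would use the facts recorded immediately before the statement: $U_1$ is a commutative pro-$p$-group, hence a compact Hausdorff topological group, on which the $p$-th power map $u \mapsto u^p$ is continuous. Consequently $U_1^p$ is the continuous image of a compact space, hence compact, hence closed in $U_1$. A quotient of a profinite group by a closed normal subgroup is itself profinite, so $U_1/U_1^p$ — and therefore the whole of $K^\times/K^{\times p}$ — is profinite.

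A more hands-on rewriting, in the spirit of the proof of Proposition~\ref{group K/PK}, would present $U_1/U_1^p$ as the inverse limit $\varprojlim_n U_1/(U_1^p U_n)$. Each $U_1/U_n$ is a finite $p$-group (of order $q^{n-1}$), hence so is each $U_1/(U_1^p U_n)$, and the isomorphism with the inverse limit amounts again to the identity $\bigcap_n U_1^p U_n = U_1^p$, which holds precisely because $U_1^p$ is closed in $U_1$. The closedness of $U_1^p$ is therefore the only non-routine point in the whole argument, and since it is a soft compactness statement rather than an explicit computation, I do not anticipate any serious obstacle.
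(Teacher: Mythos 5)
Your argument is correct, but it is genuinely different from the paper's. The paper relies on Iwasawa's structure theorem $U_1 \cong \prod_{\mathbb{N}} \mathbb{Z}_p$ (cited from \cite[p.\,25]{Iw}) to write down $K^\times$, $K^{\times p}$, and an explicit homomorphism onto $\mathbb{Z}/p\mathbb{Z}\times\prod_{\mathbb{N}}\mathbb{Z}/p\mathbb{Z}$, concluding that $K^\times/K^{\times p}\cong\prod_{\mathbb{N}_0}\mathbb{Z}/p\mathbb{Z}$; profiniteness is read off from this presentation. You instead extract only what the statement needs: after the same reduction $K^\times/K^{\times p}\cong\mathbb{Z}/p\mathbb{Z}\times U_1/U_1^p$, you observe that $U_1^p$ is the continuous image of the compact group $U_1$ under $u\mapsto u^p$, hence compact, hence closed, and that a quotient of a profinite group by a closed (normal) subgroup is profinite. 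That soft argument is valid and avoids Iwasawa's theorem entirely, so it is arguably cleaner and more portable. What it does \emph{not} give you is the explicit identification $K^\times/K^{\times p}\cong\prod_{\mathbb{N}_0}\mathbb{Z}/p\mathbb{Z}$, which the paper records and then invokes in \S3 when it parametrizes quadratic extensions and writes $G_2\cong\prod_{\mathbb{N}_0}\mathbb{Z}/2\mathbb{Z}$. So if you adopt your proof you lose a byproduct that the paper actually uses; you would need to supply the explicit isomorphism separately (for instance by again appealing to Iwasawa, or to \cite[Corollary 21]{Da2} for the countable-infinitude). Your alternative inverse-limit rewriting is also fine; the identity $\bigcap_n U_1^pU_n=U_1^p$ does indeed reduce to closedness of $U_1^p$ by the compactness argument you sketch.
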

\begin{proof}
There is a canonical isomorphism $K^{\times}\cong\mathbb{Z}\times \mathfrak{o}^{\times}$. The group of units is a direct product $\mathfrak{o}^{\times}\cong \F_q^{\times}\times U_1$, with $q=p^f$. By \cite[p.25]{Iw}, the group $U_1$ is a direct product of countable many copies of the ring of $p$-adic integers
$$U_1\cong\mathbb{Z}_p\times\mathbb{Z}_p\times\mathbb{Z}_p\times...=\prod_{\mathbb{N}}\mathbb{Z}_p.$$
Give $\mathbb{Z}$ the discrete topology and $\mathbb{Z}_p$ the $p$-adic topology. Then, for the product topology, $K^{\times}=\mathbb{Z}\times\mathbb{Z}/(q-1)\mathbb{Z}\times\prod_{\mathbb{N}}\mathbb{Z}_p$ is a topological group, locally compact, Hausdorff and totally disconnected.

Now, $K^{\times p}$ decomposes as a product of countable many components
\[
K^{\times p}\cong p\mathbb{Z}\times\mathbb{Z}/(q-1)\mathbb{Z}\times p\mathbb{Z}_p\times p\mathbb{Z}_p\times...\]
\[\hskip -0.45cm =p\mathbb{Z}\times\mathbb{Z}/(q-1)\mathbb{Z}\times \prod_{\mathbb{N}}p\mathbb{Z}_p.
\]
Note that $p\mathbb{Z}/(q-1)\mathbb{Z}=\mathbb{Z}/(q-1)\mathbb{Z}$, since $p$ and $q-1$ are coprime. Denote by $z=\prod_n z_n$ an element of $\prod_{\mathbb{N}}\mathbb{Z}_p$, where $z_n=\sum_{i=0}^{\infty}a_{i,n}p^i \in\mathbb{Z}_p$, for every $n$.

The map
\[
\varphi:\mathbb{Z}\times\mathbb{Z}/(q-1)\mathbb{Z}\times\prod_{\mathbb{N}}\mathbb{Z}_p\rightarrow\mathbb{Z}/p\mathbb{Z}\times\prod_{\mathbb{N}}\mathbb{Z}/p\mathbb{Z}
\]
defined by
\[
(x,y,z)\mapsto (x\!\!\mod p),\prod_n pr_0(z_n))
\]
where $pr_0(z_n)=a_{0,n}$ is the projection, is clearly a group homomorphism.

Now, $\mathbb{Z}/p\mathbb{Z}\times\prod_{\mathbb{N}}\mathbb{Z}/p\mathbb{Z}=\prod_{n=0}^{\infty}\mathbb{Z}/p\mathbb{Z}$ is a topological group for the product topology, where each component $\mathbb{Z}/p\mathbb{Z}$ has the discrete topology.  It is compact, Hausdorff and totally disconnected. Therefore, $\prod_{n=0}^{\infty}\mathbb{Z}/p\mathbb{Z}$ is a profinite group.

Since
\[
ker\varphi=p\mathbb{Z}\times\mathbb{Z}/(q-1)\mathbb{Z}\times\prod_{\mathbb{N}}p\mathbb{Z}_p,
\]
it follows that there is an isomorphism of topological groups
\[
K^{\times}/K^{\times p}\cong\prod_{\mathbb{N}_0}\mathbb{Z}/p\mathbb{Z},
\]
where $K^{\times}/K^{\times p}$ is given the quotient topology. Therefore, $K^{\times}/K^{\times p}$ is profinite.
\end{proof}

From propositions \ref{group K/PK} and \ref{K*/K^*p is profinite}, $K/\wp(K)$ is a discrete abelian group and $K/K^{\times p}$ is an abelian profinite group, both annihilated by $p=ch(K)$. Therefore, Pontryagin duality coincides with $Hom(-,\mathbb{Z}/p\mathbb{Z})$ on both of these groups, see \cite{Th2}.  The pairing (\ref{A-S pairing KxK*}) restricts to a pairing

\begin{equation}\label{A.-S- Pairing}
[.,.): K/\wp(K)\times K^{\times}/K^{\times p}\rightarrow\Z/p\Z.
\end{equation}
which we refer from now on to the \textbf{Artin-Schreier pairing}. It follows from (\ref{Artin-Schreier symbol (iii)}) and (\ref{Artin-Schreier symbol (iv)}), that the pairing is nondegenerate (see also \cite[Proposition 3.1]{Th2}). The next result shows that the pairing is perfect.

\begin{prop}\label{A-S symbol induce quadratic character}
The Artin-Schreier symbol induces isomorphisms of topological groups
\[
K^{\times}/K^{\times p}\stackrel{\simeq}\longrightarrow \Hom(K/\wp(K),\mathbb{Z}/p\mathbb{Z}), bK^{\times p}\mapsto(a+\wp(K)\mapsto[a,b))
\]
and
\[
K/\wp(K)\stackrel{\simeq}\longrightarrow \Hom(K^{\times}/K^{\times p},\mathbb{Z}/p\mathbb{Z}), a+\wp(K)\mapsto(bK^{\times p}\mapsto[a,b))
\]
\end{prop}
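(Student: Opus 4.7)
The plan is to deduce both isomorphisms from Pontryagin duality, once well-definedness, injectivity and continuity of the prescribed maps have been verified.

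First I would check that both recipes give well-defined group homomorphisms. Bilinearity of the Artin-Schreier symbol yields the homomorphism property, while identities (\ref{Artin-Schreier symbol (iii)}) and (\ref{Artin-Schreier symbol (iv)}) are exactly what is needed for the symbol to descend to the quotients $K/\wp(K)$ and $K^\times/K^{\times p}$. The same identities encode the nondegeneracy of the Artin-Schreier pairing, already recorded in the excerpt, and therefore supply the injectivity of both induced maps.

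The heart of the proof is the surjectivity of the first map $\Phi : K^\times/K^{\times p} \to \Hom(K/\wp(K), \Z/p\Z)$, which I would establish via Pontryagin duality. Since $K/\wp(K)$ is discrete abelian $p$-torsion by Proposition \ref{group K/PK}, the target $\Hom(K/\wp(K), \Z/p\Z)$, equipped with pointwise convergence, is its full Pontryagin dual, hence a compact Hausdorff abelian group. The map $\Phi$ is continuous because $\Phi(b)(a) = (b,L/K)(\alpha) - \alpha$ with $L = K(\wp^{-1}(a))$ depends continuously on $b$ for each fixed $a$, by continuity of the local reciprocity map. By Proposition \ref{K*/K^*p is profinite} the source is compact, so $\Phi(K^\times/K^{\times p})$ is a closed subgroup of the dual. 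This image separates points of $K/\wp(K)$: if $a \notin \wp(K)$, identity (\ref{Artin-Schreier symbol (iv)}) produces $b$ with $\Phi(b)(a) = [a,b) \neq 0$. A closed subgroup of a Pontryagin dual that separates points must be the entire dual, by the standard biorthogonality $(H^\perp)^\perp = H$ for closed $H$ applied to $H^\perp = 0$. Hence $\Phi$ is a continuous bijection between compact Hausdorff groups, and therefore a homeomorphism.

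The second isomorphism $K/\wp(K) \to \Hom(K^\times/K^{\times p}, \Z/p\Z)$ then drops out by applying $\Hom(-,\Z/p\Z)$ to $\Phi$ and composing with Pontryagin biduality, both of which are topological isomorphisms in this setting; alternatively one runs the symmetric density argument in the opposite direction, where closedness of subgroups is automatic in the discrete group $K/\wp(K)$. The main obstacle I anticipate is the pedantic bookkeeping around the quantifiers in (\ref{Artin-Schreier symbol (iii)}) and (\ref{Artin-Schreier symbol (iv)}) to extract clean well-definedness and separation-of-points statements at the level of the quotients; once this is in place, the Pontryagin duality machinery does the remaining work essentially formally.
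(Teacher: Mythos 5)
Your proposal is correct, but it takes a genuinely different (and more self-contained) route than the paper. The paper's own proof is a one-liner: it invokes Proposition 5.1 of \cite{Th2} (with $n=1$) to supply the perfectness of the pairing, and then merely observes that for $p$-torsion groups Pontryagin duality coincides with $\Hom(-,\Z/p\Z)$-duality. You, on the other hand, reconstruct the surjectivity and bicontinuity from first principles: injectivity from the nondegeneracy furnished by (\ref{Artin-Schreier symbol (iii)}) and (\ref{Artin-Schreier symbol (iv)}), continuity of $\Phi$ from the continuity of the local reciprocity map, and then surjectivity by the argument that a closed (because compact-to-Hausdorff) point-separating subgroup of a Pontryagin dual equals the whole dual by biorthogonality; the compact-Hausdorff bijection is then automatically a homeomorphism, and the second isomorphism follows by dualizing $\Phi$ and invoking biduality. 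In effect you trade the citation to \cite[Prop.\ 5.1]{Th2} for a short direct argument built on Propositions \ref{group K/PK} and \ref{K*/K^*p is profinite}, which the paper proves anyway. The one place that deserves a word of caution is the step from identity (\ref{Artin-Schreier symbol (iii)}) to nondegeneracy modulo $K^{\times p}$: as literally stated, (\ref{Artin-Schreier symbol (iii)}) controls $b$ modulo a norm group $N_{L/K}L^{\times}$ for a single $a$, and deducing that vanishing of $[a,b)$ for all $a$ forces $b\in K^{\times p}$ requires $\bigcap_a N_{K(\wp^{-1}(a))/K}K(\wp^{-1}(a))^{\times}=K^{\times p}$ (the Existence Theorem); the paper asserts this nondegeneracy in the paragraph preceding the proposition, citing \cite[Prop.\ 3.1]{Th2}, so you are entitled to use it, but in a fully self-contained write-up you would want to spell this intersection out.
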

\begin{proof}
The result follows by taking $n=1$ in Proposition $5.1$ of \cite{Th2}, and from the fact that Pontryagin duality for the groups $K/\wp(K)$ and $K^{\times}/K^{\times p}$ coincide with $Hom(-,\mathbb{Z}/p\mathbb{Z})$ duality. Hence, there is an isomorphism of topological groups between each such group and its bidual.
\end{proof}

Let $B$ be a subgroup of the additive group of $K$ with finite index such that $\wp(K)\subseteq B\subseteq K$. The composite of two finite abelian Galois extensions of exponent $p$ is again a finite abelian Galois extension of exponent $p$. Therefore, the composite
\[
K_B=K(\wp^{-1}(B))=\prod_{a\in B}K(\wp^{-1}(a))
\]
is a finite abelian Galois extension of exponent $p$. On the other hand, if $L/K$ is a finite abelian Galois extension of exponent $p$, then $L=K_B$ for some subgroup $\wp(K)\subseteq B\subseteq K$ with finite index.

All such extensions lie in the maximal abelian extension of exponent $p$, which we denote by $K_p=K(\wp^{-1}(K))$. The extension $K_p/K$ is infinite and Galois. The corresponding Galois group $G_p=Gal(K_p/K)$ is an infinite profinite group and may be identified, under class field theory, with $K^{\times}/K^{\times p}$, see \cite[Proposition 5.1]{Th2}. The case $ch(K)=2$ leads to $G_2\cong K^{\times}/K^{\times 2}$ and will play a fundamental role in the sequel.

\section{Quadratic characters}   From now on we take $K$ to be a local function field with $ch(K)=2$.
Therefore, $K$ is of the form $\mathbb{F}_q((\varpi))$ with $q = 2^f$.


When $K = \mathbb{F}_q((\varpi))$, we have,  according to \cite[p.25]{Iw},
\[
U_1\cong \mathbb{Z}_2\times\mathbb{Z}_2\times\mathbb{Z}_2\times...=\prod_{\mathbb{N}}\mathbb{Z}_2
\]
with countably infinite many copies of $\mathbb{Z}_2$, the ring of $2$-adic integers.

Artin-Schreier theory provides a way to parametrize all the
quadratic extensions of $K=\mathbb{F}_q((\varpi))$. By proposition \ref{K*/K^*p is profinite}, there is a
bijection between the set of quadratic extensions of
$\mathbb{F}_q((\varpi))$ and the group
\[
\mathbb{F}_q((\varpi))^{\times}/\mathbb{F}_q((\varpi))^{\times 2}\cong\prod_{\mathbb{N}_0}\mathbb{Z}/2\mathbb{Z}= G_2
\]
where $G_2$ is the Galois group of the \emph{maximal abelian extension of
exponent} $2$. Since $G_2$ is an infinite profinite group, there are
countably many quadratic extensions.

To each quadratic extension $K(\alpha)/K$, with $\alpha^2-\alpha=a$, we associate the Artin-Schreier symbol
\[
[a,.) : K^{\times}/K^{\times 2}\rightarrow\mathbb{Z}/2\mathbb{Z}.
\]
Now, let $\varphi$ denote the isomorphism $\mathbb{Z}/2\mathbb{Z}\cong\mu_2(\mathbb{C})=\{\pm 1\}$ with the group of roots of unity. We obtain, by composing with the Artin-Schreier symbol, a unique multiplicative quadratic character
\begin{equation}\label{def A-S character}
\chi_a: K^{\times}\rightarrow\mathbb{C}^{\times}, \,\, \chi_a=\varphi([a,.))
\end{equation}

Proposition \ref{A-S symbol induce quadratic character} shows that every quadratic character of $\mathbb{F}_q((\varpi))^{\times}$ arises in this way.

\begin{ex}\label{example: unr. quadratic ext.}
The unramified quadratic extension of $K$ is $K(\wp^{-1}(\mathfrak{o}))$, see \cite{Da} proposition $12$. According to Dalawat, the group $K/\wp(K)$ may be regarded as an $\mathbb{F}_2$-space and the image of $\mathfrak{o}$ under the canonical surjection $K\rightarrow K/\wp(K)$ is an $\mathbb{F}_2$-line, i.e., isomorphic to $\mathbb{F}_2$. Since $\wp_{|\mathfrak{p}}:\mathfrak{p}\rightarrow\mathfrak{p}$ is an isomorphism, the image of $\mathfrak{p}$ in $K/\wp(K)$ is $\{0\}$, see lemma $8$ in \cite{Da}. Now, choose any $a_0\in\mathfrak{o}\backslash\mathfrak{p}$ such that the image of $a_0$ in $\mathfrak{o}/\mathfrak{p}$ has nonzero trace in $\F_2$, see \cite[Proposition 9]{Da}. The quadratic character $\chi_{a_0}=\varphi([a_0,.))$ associated with $K(\wp^{-1}(\mathfrak{o}))$ via class field theory is precisely the unramified character $(n\mapsto(-1)^n)$ from above. Note that any other choice $b_0\in\mathfrak{o}\backslash\mathfrak{p}$, with $a_0\neq b_0$, gives the same unique unramified character, since there is only one nontrivial coset $a_0+\wp(K)$ for $a_0\in\mathfrak{o}\backslash\mathfrak{p}$.
\end{ex}

\smallskip

Let $\cG$ denote  $\SL_2(K)$, let $\cB$ be the standard Borel subgroup of $\cG$, let $\cT$ be the diagonal subgroup of $\cG$.
Let $\chi$ be a character of $\cT$. Then, $\chi$ inflates to a character of $\cB$. Denote by $\pi(\chi)$ the (unitarily) induced representation $\Ind_{\cB}^{\cG}(\chi)$. The representation space $V(\chi)$ of $\pi(\chi)$ consists of locally constant complex valued functions $f:\cG\rightarrow\mathbb{C}$ such that, for every $a\in K^{\times}$, $b\in K$ and $g\in \cG$, we have

\[
f\bigg(\left( \begin{array}{cc}
 a & b \\
 0 & a^{-1}
\end{array} \right)\Bigg)=|a|\chi(a)f(g)
\]

The action of $\cG$ on $V(\chi)$ is by right translation. The representations $(\pi(\chi),V(\chi))$ are called (unitary) principal series of $\cG=SL_2(K)$.

Let $\chi$ be a quadratic character of $K^{\times}$. The reducibility of the induced representation $\Ind_B^G(\chi)$ is well known in zero characteristic. Casselman proved that the same result holds in characteristic $2$ and any other positive characteristic $p$.

\begin{thm}\cite{Ca,Ca2}\label{Casselman's th.}
The representation $\pi(\chi)= \Ind_{\cB}^{\cG}(\chi)$ is reducible if, and only if, $\chi$ is either $|.|^{\pm}$ or a nontrivial quadratic character of $K^{\times}$.
\end{thm}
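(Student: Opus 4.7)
The plan is to use Jacquet module and intertwining operator machinery, which extends from characteristic zero with only cosmetic adjustments. The first step is to compute the Jacquet module $r_\cB\pi(\chi)$ via the Bruhat decomposition $\cG = \cB \sqcup \cB w \cB$, where $w$ is the nontrivial Weyl element. The geometric lemma yields a two-step filtration whose Jordan--H\"older factors as $\cT$-modules are $\chi\delta_\cB^{1/2}$ and $\chi^{-1}\delta_\cB^{1/2}$, where $\delta_\cB$ is the modular character of $\cB$. Frobenius reciprocity then gives
\[
\Hom_\cG\bigl(\pi(\chi),\pi(\chi')\bigr) \;\cong\; \Hom_\cT\bigl(r_\cB\pi(\chi),\,\chi'\delta_\cB^{1/2}\bigr),
\]
so any nonzero intertwiner forces $\chi' \in \{\chi,\chi^{-1}\}$; in particular $\pi(\chi)\cong\pi(\chi^{-1})$ whenever $\pi(\chi)$ is irreducible.

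The second step examines the standard intertwining operator $M(\chi): \pi(\chi) \to \pi(\chi^{-1})$, given by integration over the opposite unipotent radical, convergent in a half-space of the character variety and meromorphically continued elsewhere. The case analysis then runs as follows. If $\chi^2 \neq 1$ and $\chi \neq |\cdot|^{\pm 1}$, then $M(\chi)$ is an isomorphism and $\pi(\chi)$ is irreducible by Schur's lemma applied to the one-dimensional endomorphism space. If $\chi = |\cdot|^{\pm 1}$, the intertwiner degenerates and produces the familiar short exact sequence involving $\St$ and $\triv$. If $\chi^2 = 1$ but $\chi$ is nontrivial, the two Jordan--H\"older factors of $r_\cB\pi(\chi)$ coincide and the extension splits, so the endomorphism space becomes two-dimensional and $\pi(\chi)$ decomposes into a direct sum of two irreducibles. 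Finally, if $\chi = 1$, the two factors again coincide but the Jacquet-module extension is nontrivial, so the endomorphism ring remains one-dimensional and $\pi(\triv)$ is irreducible.

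The main obstacle I anticipate is verifying that the intertwining integral and its functional equation behave in the standard way when $\mathrm{char}(K) = 2$, which is the content of \cite{Ca,Ca2}. The convergence, meromorphic continuation, and composition identity $M(\chi^{-1})\circ M(\chi) = c(\chi)\cdot\mathrm{id}$ depend only on the Weyl-group action on $K^\times$ and the modular character, and are insensitive to $\mathrm{char}(K)$. The subtlest point is distinguishing the two cases where $\chi^2 = 1$: identical Jordan--H\"older semisimplifications but split versus nonsplit Jacquet-module extensions. This hinges on the precise value of the intertwining integral at the self-dual locus and is exactly what separates the reducible nontrivial quadratic characters from the irreducible trivial one. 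The Artin--Schreier analysis of \S 2 then ensures that in characteristic $2$ the reducibility locus $\{\chi : \chi^2 = 1,\; \chi \neq 1\}$ is countably infinite, parametrized via class field theory by $K^\times/K^{\times 2} \cong \prod_{\mathbb{N}_0} \mathbb{Z}/2\mathbb{Z}$.
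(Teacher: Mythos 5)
The paper itself does not give a proof of this theorem: it simply cites Casselman's papers \cite{Ca,Ca2}. Your strategy --- Jacquet modules via the geometric lemma, Frobenius reciprocity, and the standard intertwining operator $M(\chi)$ --- is exactly Casselman's method and is a sound blueprint. There is, however, a logical issue you should be aware of. Your case analysis at $\chi^2 = 1$ asserts that the Jacquet module extension splits when $\chi$ is a nontrivial quadratic character and is nonsplit when $\chi = 1$. Those two claims are not independent inputs: combined with Frobenius reciprocity, they are \emph{equivalent} to the reducibility/irreducibility conclusions you are trying to establish. So as written the $\chi^2 = 1$ case is circular until you actually evaluate the normalized intertwining operator (or equivalently the $c$-function) at the self-dual point and show it has both eigenvalues $\pm 1$ for nontrivial quadratic $\chi$ but acts by a scalar at $\chi = 1$. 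You correctly flag this as the crux, and it is precisely what is carried out in the cited references, so the outline is acceptable at the level of a theorem that the paper itself takes on faith.

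It is worth noting that immediately after stating this theorem, the paper gives a different and more economical argument for the reducibility direction in the quadratic case: one restricts the irreducible unitary principal series $\Ind_{\widetilde{\cB}}^{\GL_2(K)}(1 \otimes \chi)$ to $\SL_2(K)$. Clifford theory for $\SL_2 \subset \GL_2$ shows the intertwining algebra of the restriction has dimension equal to the order of the group of characters $\eta$ of $K^\times$ (acting via $\det$) with $\eta \otimes \pi \cong \pi$; for $\chi$ nontrivial quadratic this group has order $2$, forcing a decomposition into two inequivalent constituents. This bypasses the intertwining-integral computation entirely, though it only handles reducibility at quadratic $\chi$ and does not by itself prove the irreducibility statement for generic $\chi$ or at $\chi = 1$; for those, the paper still defers to Casselman, as do you.
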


For a proof see \cite[Theorems 1.7, 1.9]{Ca} and \cite[\S 9]{Ca2}.

\medskip

From now on, $\chi$ will be a quadratic character. It is a classical result that the unitary
principal series for $\GL_2$ are irreducible. For a representation of $\GL_2$ parabolically induced by
$1\otimes\chi$, Clifford theory tells us that the dimension of the intertwining algebra of its restriction to
$\SL_2$ is $2$. This is exactly the induced representation of $\SL_2$ by $\chi$:
\[
\Ind_{\widetilde{B}}^{\GL_2(K)}(1\otimes\chi)_{|\SL(2,K)}\stackrel{\simeq}\longrightarrow \Ind_{B}^{\SL_2(K)}(\chi)
\]
where $\widetilde{B}$ denotes the standard Borel subgroup of $\GL_2(K)$. This leads to reducibility of the induced representation
$\Ind_B^G(\chi)$ into two inequivalent constituents.  Thanks to M. Tadic for helpful comments at this point.

The two irreducible constituents
\begin{align}
\pi(\chi)= \Ind_{B}^{G}(\chi)=\pi(\chi)^+\oplus\pi_(\chi)^-
\end{align}
define an $L$-packet $\{\pi(\chi)^+ , \pi(\chi)^-\}$ for $\SL_2$.

\section{Biquadratic extensions of $\F_q((\varpi))$}

Quadratic extensions $L/K$ are obtained by adjoining an $\mathbb{F}_2$-line $D\subset K/\wp(K)$. Therefore, $L=K(\wp^{-1}(D))=K(\alpha)$ where $D=span\{a+\wp(K)\}$, with $\alpha^2-\alpha=a$. In particular, if $a_0\in\mathfrak{o}\backslash\mathfrak{p}$ such that the image of $a_0$ in $\mathfrak{o}/\mathfrak{p}$ has nonzero trace in $\F_2$, the $\mathbb{F}_2$-line $V_0=span\{a_0+\wp(K)\}$ contains all the cosets $a_i+\wp(K)$ where $a_i$ is an integer and so $K(\wp^{-1}(\mathfrak{o}))=K(\wp^{-1}(V_0))=K(\alpha_0)$ where $\alpha_0^2-\alpha_0=a_0$ gives the unramified quadratic extension.

Biquadratic extensions are computed the same way, by considering $\mathbb{F}_2$-planes $W=span\{a+\wp(K), b+\wp(K)\}\subset K/\wp(K)$. Therefore, if $a+\wp(K)$ and $b+\wp(K)$ are $\mathbb{F}_2$-linearly independent then $K(\wp^{-1}(W)):=K(\alpha, \beta)$ is biquadratic, where $\alpha^2-\alpha=a$ and $\beta^2-\beta=b$, $\alpha, \beta\in K^s$. Therefore, $K(\alpha, \beta)/K$ is biquadratic if $b-a\not\in\wp(K)$.

A biquadratic extension containing the line $V_0$ is of the form $K(\alpha_0,\beta)/K$. There are countably many quadratic extensions $L_0/K$ containing the unramified quadratic extension. They have ramification index $e(L_0/K)=2$. And there are countably many biquadratic extensions $L/K$ which do not contain the unramified quadratic extension. They have ramification index $e(L/K)=4$.

So, there is a plentiful supply of biquadratic extensions $K(\alpha, \beta)/K$.

\subsection{Ramification}

The space $K/\wp(K)$ comes with a filtration

\begin{equation}\label{Filtration K/P(K)}
0\subset_1 V_0\subset_f V_1=V_2\subset_f V_3=V_4\subset_f ...\subset K/\wp(K)
\end{equation}
where $V_0$ is the image of $\mathfrak{o}_K$  and $V_i$ ($i>0$) is the image of $\mathfrak{p}^{-i}$ under the canonical surjection $K\rightarrow K/\wp(K)$. For $K=\mathbb{F}_q((\varpi))$ and $i>0$, each inclusion $V_{2i}\subset_f V_{2i+1}$ is a sub-$\mathbb{F}_2$-space of codimension $f$. The $\F_2$-dimension of $V_n$ is
\begin{equation}\label{F_2 dim.}
dim_{\F_2}V_n=1+\lceil n/2 \rceil f,
\end{equation}
for every $n\in\mathbb{N}$, where $\lceil x \rceil$ is the smallest integer bigger than $x$.

\bigskip

Let $L/K$ denote a Galois extension with Galois group $G$. For each $i\geq -1$ we define the $i^{th}$-ramification subgroup of $G$ (in the lower numbering) to be:
$$G_i=\{\sigma\in G: \sigma(x)-x \in\mathfrak{p}_L^{i+1}, \forall x\in\mathfrak{o}_L\}.$$
An integer $t$ is a \emph{break} for the filtration $\{G_i\}_{i\geq -1}$ if $G_t\neq G_{t+1}$. The study of ramification groups $\{G_i\}_{i\geq -1}$ is equivalent to the study of breaks of the filtration.

There is another decreasing filtration with upper numbering $\{G^i\}_{i\geq -1}$ and defined by the \emph{Hasse-Herbrand function} $\psi=\psi_{L/K}$:
$$G^u=G_{\psi(u)}.$$
In particular, $G^{-1}=G_{-1}=G$ and $G^0=G_0$, since $\psi(0)=0$.

\bigskip

Let $G_2=Gal(K_2/K)$ be the Galois group of the maximal abelian extension of exponent $2$, $K_2=K(\wp^{-1}(K))$. Since $G_2\cong K^{\times}/K^{\times 2}$ (proposition \ref{K*/K^*p is profinite}), the pairing $K^{\times}/K^{\times 2}\times K/\wp(K)\rightarrow \Z/2\Z$ from (\ref{A.-S- Pairing}) coincides with the pairing $G_2\times K/\wp(K)\rightarrow \Z/2\Z$.

The profinite group $G_2$ comes equipped with a ramification filtration $(G_2^u)_{u\geq -1}$ in the upper numbering, see \cite[p.409]{Da}. For $u\geq 0$, we have an orthogonal relation \cite[Proposition 17]{Da}
\begin{equation}\label{orthogonal}
(G_2^u)^{\bot}=\overline{\mathfrak{p}^{-\lceil u \rceil+1}}=V_{\lceil u \rceil-1}
\end{equation}
under the pairing $G_2\times K/\wp(K)\rightarrow \Z/2\Z$.

\bigskip
Since the upper filtration is more suitable for quotients, we will compute the upper breaks. By using the Hasse-Herbrand function it is then possible to compute the lower breaks in order to obtain the lower ramification filtration.

According to \cite[Proposition $17$]{Da}, the positive breaks in the filtration $(G^v)_v$ occur precisely at integers prime to $p$. So, for $ch(K)=2$, the positive breaks will occur at odd integers. The lower numbering breaks are also integers. If $G$ is cyclic of prime order, then there is a unique break for any decreasing filtration $(G^v)_v$ (see \cite{Da}, Proposition $14$). In general, the number of breaks depends on the possible filtration of the Galois group.

Given a plane $W\subset K/\wp(K)$, the filtration (\ref{Filtration K/P(K)}) $(V_i)_i$ on $K/\wp(K)$ induces a filtration $(W_i)_i$ on $W$, where $W_i=W\cap V_i$. There are three possibilities for the filtration breaks on a plane and we will consider each case individually.

\bigskip

\textbf{Case 1 :} $W$ contains the line $V_0$, i.e. $L_0=K(\wp^{-1}(W))$ contains the unramified quadratic extension $K(\wp^{-1}(V_0))=K(\alpha_0)$ of $K$. The extension has residue degree $f(L_0/K)=2$ and ramification index $e(L_0/K)=2$. In this case, there is an integer $t>0$, necessarily odd, such that the filtration $(W_i)_i$ looks like
$$0\subset_1 W_0=W_{t-1}\subset_1 W_{t}=W.$$

By the orthogonality relation (\ref{orthogonal}), the upper ramification filtration on $G=Gal(L_0/K)$ looks like
$$\{1\}=...=G^{t+1}\subset_1G^{t}=...=G^0\subset_1G^{-1}=G$$
Therefore, the upper ramification breaks occur at $-1$ and $t$.


\medskip

The number of such $W$ is equal to the number of planes in $V_t$ containing the line $V_0$ but but not contained in the subspace $V_{t-1}$. This number can be computed and equals the number of biquadratic extensions of $K$ containing the unramified quadratic extensions and with a pair of upper ramification breaks $(-1,t)$, $t>0$ and odd. Here is an example.

\begin{ex}
The number of biquadratic extensions containing the unramified quadratic extension and with a pair of upper ramification breaks $(-1,1)$ is equal to the number of planes in an $1+f$-dimensional $\F_2$-space, containing the line $V_0$. There are precisely
$$1+2+2^2+...+2^{f-1}=\frac{1-2^f}{1-2}=q-1$$
of such biquadratic extensions.
\end{ex}

\bigskip

\textbf{Case 2.1 :} $W$ does not contains the line $V_0$ and the induced filtration on the plane $W$ looks like
$$0=W_{t-1}\subset_2 W_{t}=W$$
for some integer $t$, necessarily odd.

The number of such $W$ is equal to the  number of planes in $V_t$ whose intersection with $V_{t-1}$ is $\{0\}$. Note that, there are no such planes when $f=1$. So, for $K=\F_2((\varpi))$, \textbf{case 2.1} does not occur.

Suppose $f>1$. By the orthogonality relation, the upper ramification ramification filtration on $G=Gal(L/K)$ looks like
$$\{1\}=...=G^{t+1}\subset_2G^{t}=...=G^{-1}=G$$
Therefore, there is a single upper ramification break occurring at $t>0$ and is necessarily odd.


\medskip

For $f=1$ there is no such biquadratic extension. For $f>1$, the number of these biquadratic extensions  equals the number of planes $W$ contained in an $\F_2$-space of dimension $1+fi$, $t=2i-1$, which are transverse to a given codimension-$f$ $\F_2$-space.

\bigskip

\textbf{Case 2.2 :} $W$ does not contains the line $V_0$ and the induced filtration on the plane $W$ looks like
$$0=W_{t_1-1}\subset_1 W_{t_1}=W_{t_2-1}\subset_1 W_{t_2}=W$$
for some integers $t_1$ and $t_2$, necessarily odd, with $0<t_1<t_2$.

The orthogonality relation for this case implies that the upper ramification filtration on $G=Gal(L/K)$ looks like
$$\{1\}=...=G^{t_2+1}\subset_1G^{t_2}=...=G^{t_1+1}\subset_1G^{t_1}=...=G$$
The upper ramification breaks occur at odd integers $t_1$ and $t_2$.




There is only a finite number of such biquadratic extensions, for a given pair of upper breaks
$(t_1,t_2)$.

\section{Langlands parameter}  We have the following canonical homomorphism:
\[
\W_K \to \W_K^{ab} \simeq K^{\times} \to K^{\times}/K^{\times 2}.
\]
According to \S 2, we also have
\[
K^{\times}/K^{\times 2} \simeq \prod \Z/2\Z
\]
the product over countably many copies of $\Z/2\Z$.  Using the countable axiom of choice, we choose two copies of $\Z/2\Z$. This creates a homomorphism
\[
 \W_K \to \Z/2\Z \times \Z/2\Z.
\]
There are countably many such homomorphisms.

  Following \cite{We}, denote by $\alpha, \beta, \gamma$ the images in $\PSL_2(\C)$ of the elements
\[
z_{\alpha} = \left(
\begin{array}{cc}
i & 0\\
0 & -i
\end{array}
\right),
\quad z_{\beta} = \left(
\begin{array}{cc}
0 & 1\\
-1 & 0
\end{array}
\right),
\quad
z_{\gamma} = \left(
\begin{array}{cc}
0 & i\\
i & 0
\end{array}
\right),
\]
in $\SL_2(\C)$.

Note that $z_\alpha, z_\beta, z_\gamma \in \SU_2(\C)$ so that
\[
\alpha, \beta, \gamma \in \PSU_2(\C) = \SO_3(\R).
\]
  Denote by $J$ the group generated by $\alpha, \beta, \gamma$:
\[
J: = \{\epsilon, \alpha, \beta,\gamma\} \simeq \Z/2\Z \times \Z/2\Z.
\]
The group $J$ is unique up to conjugacy in $G = \PSL_2(\C)$.

The pre-image of $J$ in $\SL_2(\C)$ is the group
$\{\pm 1, \pm z_{\alpha}, \pm z_{\beta}, \pm z_{\gamma} \}$ and is isomorphic to the group $U_8$ of unit quaternions $\{\pm 1, \pm \i, \pm \j, \pm \k\}$.

The centralizer and normalizer of $J$ are given by
\[
C_G(J) = J, \quad 	N_G(J) = O
\]
 where $O \simeq S_4$ the symmetric group on $4$ letters.
 The quotient  $O/J \simeq  \GL_2(\Z/2)$ is the full automorphism group of $J$.

Each biquadratic extension $L/K$ determines a Langlands parameter
\begin{align}\label{phi}
\phi : \Gal(L/K) \to \SO_3(\R) \subset \SO_3(\C)
\end{align}

Define
\begin{align}\label{S}
S_{\phi} & = C_{\PSL_2(\C)}(\im \, \phi)
\end{align}
Then we  have $S_{\phi}	= J$, since $C_G(J) = J$,  and whose conjugacy class depends only on $L$, since $O/J = \Aut(J)$.

  Define the new group
\begin{align*}
\mathcal{S}_{\phi} & = C_{\SL_2(\C)}(\im \, \phi)
\end{align*}
To align with the notation in \cite{ABPS2}, replace $\phi^\sharp$ in \cite{ABPS2} by $\phi$ in the present article.
We have the short exact sequence
\[
1 \to \mathcal{Z}_{\phi} \to \mathcal{S}_{\phi} \to S_{\phi} \to 1
\]
with $\mathcal{Z}_{\phi} = \Z/2\Z$.

Let $D$ be a central division algebra of dimension $4$ over $K$, and let $\Nrd$ denote the reduced norm on $D^\times$.  Define
\[
\SL_1(D) = \{x \in D^\times : \Nrd (x) = 1\}.
\]
Then $\SL_1(D)$ is an inner form of $\SL_2(K)$.
In the local Langlands correspondence \cite{ABPS2} for the inner forms of $\SL_2$,
the L-parameter $\phi$ is enhanced by elements $\rho \in \Irr(\mathcal{S}_{\phi})$.   Now the group $\mathcal{S}_{\phi} \simeq U_8$ admits four characters $\rho_1, \rho_2, \rho_3, \rho_4$ and one irreducible representation $\rho_0$ of degree $2$.

The parameter $\phi$ creates a big packet with five elements, which are allocated to $\SL_2(K)$ or $\SL_1(D)$ according to central characters.   So $\phi$ assigns an $L$-packet $\Pi_{\phi}$ to $\SL_2(K)$ with $4$ elements, and a singleton packet to the inner form $\SL_1(D)$.  None of these packets contains the Steinberg representation of $\SL_2(K)$ and so each $\Pi_{\varphi}$ is a  supercuspidal $L$-packet with $4$ elements.

 To be explicit:
$\phi$ assigns to $\SL_2(K)$  the supercuspidal packet
\[
\{\pi(\phi, \rho_1), \pi(\phi, \rho_2), \pi(\phi, \rho_3), \pi(\phi, \rho_4)\}
\]
and to $\SL_1(D)$ the singleton packet
\[
\{\pi(\phi, \rho_0)\}
\]
and this phenomenon occurs countably many times.

Each supercuspidal packet of four elements is the \emph{JL-transfer} of the singleton packet, in the following sense:  the irreducible supercuspidal representation $\theta$ of $\GL_2(K)$ which yields the $4$-packet upon restriction to $\SL_2(K)$ is the image in the JL-correspondence of the irreducible smooth representation $\psi$ of $\GL_1(D)$ which yields two copies of $\pi(\phi, \rho_0)$ upon restriction to $\SL_1(D)$:
\[
\theta = JL(\psi).
\]

Each parameter $\phi : \W_K \to \PGL_2(\C)$ lifts to a Galois representation
\[
\phi : \W_K \to \GL_2(\C).
\]
   This representation is \emph{triply imprimitive}, as in \cite{We}.   Let $\mathfrak{T}(\phi)$ be the group of characters $\chi$ of $\W_K$ such that
   $\chi \otimes \phi \simeq \phi$.   Then  $\mathfrak{T}(\phi)$ is non-cyclic of order $4$.

   \section{Depth}  Let $L/K$ be a biquadratic extension. We fix an algebraic closure $\overline{K}$ of $K$ such that $L\subset \overline{K}$. From the inclusion $L\subset \overline{K}$, there is a natural surjection

$$\pi_{L/K} : \Gal(\overline{K}/K)\rightarrow \Gal(L/K)$$

Let $K^{ur}$ be the maximal unramified extension of $K$ in $\overline{K}$ and let $K^{ab}$ be the maximal abelian extension of $K$ in $\overline{K}$.
We have a commutative diagram, where the horizontal maps are the canonical maps and the vertical maps are the natural projections

\[
\xymatrix{
1 \ar[r]& I_{\overline{K}/K} \ar[d]_{\alpha_1}\ar[r]^{\iota_1}
& Gal(\overline{K}/K) \ar[d]_{\pi_1}\ar[r]^{p_1} & Gal(K^{ur}/K) \ar[d]_{id}\ar[r]&  1\\
1 \ar[r]& I_{K^{ab}/K} \ar[d]_{\alpha_2}\ar[r]^{\iota_2}
& Gal(K^{ab}/K) \ar[d]_{\pi_2}\ar[r]^{p_2} & Gal(K^{ur}/K) \ar[d]_{\beta}\ar[r]& 1\\
1 \ar[r]& \I_{L/K} \ar[r]^{\iota_3}
& Gal(L/K) \ar[r]^{p_3} & Gal(L\cap K^{ur}/K) \ar[r] & 1
}
\]

In the above notation, we have $\pi_{L/K}=\pi_2\circ\pi_1$.

Let
\begin{equation}
... \I^{(2)}\subset \I^{(1)} \subset \I^{(0)}\subset G=\Gal(L/K)
\end{equation}
be the filtration of the
relative inertia subgroup $\I^{(0)}=\I_{L/K}$ of $\Gal(L/K)$, $\I^{(1)}$ is the wild inertia subgroup, and so on... Note that $\I^{(r)}$
is the restriction of the filtration $G^r$ of $G=Gal(L/K)$ to the subgroup $\I_{L/K}$, i.e, $\I^{(r)}=\iota_3(G^r)$.

Let
\begin{equation}
... I^{(2)}\subset I^{(1)} \subset I^{(0)}\subset G=\Gal(\overline{K}/K)
\end{equation}
be the filtration of the absolute inertia subgroup $I^{(0)}=I_{\overline{K}/K}$ of $\Gal(K^s/K)$, $I^{(1)}$ is the wild inertia subgroup, and so on...

\medskip

\begin{lem} We have
\[
(\forall r ) \; \pi_{L/K} I^{(r)} = \I^{(r)}
\]

\end{lem}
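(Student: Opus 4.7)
The plan is to deduce this lemma from Herbrand's theorem on the compatibility of the upper numbering ramification filtration with quotients, so the argument will be essentially a citation rather than a fresh computation.

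First I would translate both sides into the upper numbering ramification filtrations on the respective Galois groups. By the definitions preceding the lemma, $I^{(r)}$ is the restriction of the upper filtration $\Gal(\overline{K}/K)^r$ to the absolute inertia subgroup, and $\I^{(r)} = \iota_3(G^r)$ where $G^r$ denotes the upper filtration on $G = \Gal(L/K)$. For $r \geq 0$ the ramification subgroups lie inside the inertia subgroup, so the lemma reduces to the assertion
\[
\pi_{L/K}\bigl(\Gal(\overline{K}/K)^r\bigr) = \Gal(L/K)^r \qquad (r \geq 0);
\]
for $r = -1$ both sides equal the full Galois group and the claim is immediate from surjectivity of $\pi_{L/K}$.

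The key input is Herbrand's theorem: for a finite Galois extension $L'/K$ with Galois group $G'$ and a normal subgroup $H' \trianglelefteq G'$ corresponding to an intermediate Galois extension $L/K$, one has $(G'/H')^r = G'^r H'/H'$ for every $r \geq -1$. This is the standard consequence of the multiplicativity of the Hasse-Herbrand function $\psi_{L'/K} = \psi_{L/K} \circ \psi_{L'/L}$ (see e.g.\ Serre, \emph{Local Fields}, Chap.\ IV), and is precisely the reason why the upper numbering, rather than the lower numbering, is well behaved under quotients.

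Finally I would pass from the finite to the infinite setting. The upper numbering filtration on the absolute Galois group $\Gal(\overline{K}/K)$ is defined by inverse limit in such a way that, for every finite Galois subextension $L'/K$, the projection onto $\Gal(L'/K)$ carries $\Gal(\overline{K}/K)^r$ onto $\Gal(L'/K)^r$. Taking $L' = L$ (which is legitimate because $L/K$ is itself Galois) and combining with the previous paragraph yields $\pi_{L/K}(\Gal(\overline{K}/K)^r) = \Gal(L/K)^r$, which is the content of the lemma. I do not anticipate any genuine obstacle; the statement is precisely Herbrand's theorem in this context, and the only care needed is the transition from finite to infinite Galois groups, which is handled by the inverse-limit definition of the upper filtration.
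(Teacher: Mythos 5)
Your proof is correct and rests on the same underlying fact as the paper's: the compatibility of the upper-numbering ramification filtration with passage to quotients. The paper's proof is more laconic, simply declaring the lemma ``immediate from the diagram,'' but the content it implicitly invokes is exactly the Herbrand theorem and the inverse-limit definition of the upper filtration on $\Gal(\overline{K}/K)$ that you spell out; you have merely made the citation explicit rather than taking a genuinely different route.
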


\begin{proof} This follows immediately from the above diagram. Here, we identify $I^{(r)}$ with $\iota_1(I^{(r)})$ and $\I^{(r)}$ with $\iota_3(\I^{(r)})$.

\end{proof}

\begin{lem}\label{ddd}  Let $L/K$ be a biquadratic extension, let $\phi$ be the Langlands parameter (\ref{phi}), $\phi = \alpha \circ \pi_{L/K}$ with $\alpha : \Gal(L/K) \to \SO_3(\R)$.     Then we have
 $d(\phi) = r - 1$ where $r$ is the least integer for which $\I^{(r)} = 1$.
\end{lem}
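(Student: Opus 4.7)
The plan is to reduce the depth computation for $\phi$ to the vanishing of the groups $\mathfrak{I}^{(u)}$ by exploiting that $\phi$ factors through the finite quotient $\Gal(L/K)$ via an \emph{injective} homomorphism $\alpha$. Since the biquadratic extension $L/K$ has $\Gal(L/K)\simeq\Z/2\Z\times\Z/2\Z$ of order $4$, and the image of $\alpha$ was identified with $J\subset\SO_3(\R)$ which also has order $4$, the map $\alpha$ is bijective onto its image, hence injective. Consequently, for any subgroup $H\subseteq\Gal(\overline{K}/K)$, one has $\phi|_H=1$ if and only if $\pi_{L/K}(H)=1$.

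Next I would invoke the key compatibility of the upper ramification filtration with quotients (Herbrand's theorem, as in Serre, Local Fields, IV.3): the natural surjection $\pi_{L/K}\colon\Gal(\overline{K}/K)\twoheadrightarrow\Gal(L/K)$ sends $\Gal(\overline{K}/K)^u$ onto $G^u$ for every $u\geq -1$, where $G^u$ denotes the upper-numbering filtration on $G=\Gal(L/K)$. For $u\geq 0$ this image lies inside $G^0=\mathfrak{I}_{L/K}$, and by the paper's convention $G^u=\mathfrak{I}^{(u)}$. Combining this with the previous lemma (or equivalently a direct application of the diagram), I get
\[
\phi\bigl(\Gal(\overline{K}/K)^u\bigr)=\alpha(\mathfrak{I}^{(u)}),\qquad u\geq 0.
\]
Since $\alpha$ is injective, $\phi$ is trivial on $\Gal(\overline{K}/K)^u$ precisely when $\mathfrak{I}^{(u)}=1$.

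Finally I would translate this back to the Moy--Prasad/Langlands definition of depth: $d(\phi)$ is the infimum of those $s\geq 0$ for which $\phi$ is trivial on $\Gal(\overline{K}/K)^u$ for all $u>s$. By the preceding equivalence, this infimum equals the infimum of $s$ such that $\mathfrak{I}^{(u)}=1$ for all $u>s$. Now I use the structural fact recalled in Section~5 that the positive breaks of the upper-numbering filtration on $G$ occur only at (odd) integers and that $u\mapsto\mathfrak{I}^{(u)}$ is right-continuous (constant on half-open intervals between consecutive breaks). Let $r$ be the least integer with $\mathfrak{I}^{(r)}=1$. Then $\mathfrak{I}^{(u)}=1$ for every $u>r-1$, so $d(\phi)\leq r-1$; conversely, $\mathfrak{I}^{(r-1)}\neq 1$ (otherwise $r$ would not be minimal, using that breaks are at integers), so $\phi$ is nontrivial on $\Gal(\overline{K}/K)^{r-1}$ and $d(\phi)\geq r-1$. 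This gives $d(\phi)=r-1$, as asserted.

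The only conceptual obstacle is step (b): one must invoke Herbrand's theorem to know that the upper filtration passes to quotients. Everything else is essentially bookkeeping once one observes the injectivity of $\alpha$ and recalls that breaks occur at integers in our setting.
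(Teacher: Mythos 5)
Your proof is correct and takes essentially the same route as the paper. Both arguments rest on the injectivity of $\alpha$ (since $|\Gal(L/K)| = 4 = |J|$), the compatibility of the upper-numbering ramification filtration with the quotient $\pi_{L/K}$ (the paper establishes $\pi_{L/K}I^{(r)}=\I^{(r)}$ in the unnamed lemma preceding Lemma~\ref{ddd}, citing the commutative diagram, where you cite Herbrand's theorem directly), and the resulting equivalence $\phi$ trivial on $\Gal(\overline{K}/K)^u \iff \I^{(u)}=1$; the only difference is that you spell out the final bookkeeping step (using that breaks occur at integers to pass from the equivalence to $d(\phi)=r-1$), which the paper leaves implicit.
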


\begin{proof} The depth of a Langlands parameter $\phi$  is easy to define.
For $r \in \R \geq 0$ let $\Gal(F_s/F)^r$ be the $r$-th ramification subgroup of the absolute Galois group of $F$.
Then the depth of $\phi$  is the smallest number $d(\phi) \geq 0$ such that $\phi$  is trivial on $\Gal(F_s/F)^r$  for all $r > d(\phi)$.

 Note that $\alpha$ is \emph{injective}.  Therefore
\[
\phi(I^{(r)}) = 1 \iff (\alpha \circ \pi_{L/K})I^{(r)}) = 1 \iff \alpha(\I^{(r)}) = 1 \iff \I^{(r)} = 1.
\]
\end{proof}

For example, the parameter $\phi$ has depth zero if it is \emph{tamely ramified}, i.e. the least integer $r$ for which $\I^{(r)} = 1$ is $r = 1$.   The relative wild inertia group is $1$, but the relative inertia group is not $1$.

\medskip

\textbf{Case $1$:} There are two ramification breaks occurring at $-1$ and some odd integer $t>0$:

$$\{1\}=...= \I^{(t+1)}\subset \I^{(t)}=... \I^{(0)}= \I_{L/K}\subset \Gal(L/K), \quad d(\varphi) = t  $$

The allowed depths are $1,3,5,7, \ldots$.

\medskip

\textbf{Case $2.1$:} One single ramification break occurs at some odd integer $t>0$::

\[
\{1\}=...= \I^{(t+1)}\subset \I^{(t)}=...= \I^{(0)}= \I_{L/K}= \Gal(L/K); \quad d(\varphi)  = t
\]

The allowed depths are $1,3,5,7,\ldots$.

\medskip

\textbf{Case $2.2$:} There are two ramification breaks occurring at some odd integers $t_1<t_2$

\[
\{1\}=...= \I^{(t_2+1)}\subset \I^{(t_2)}=...= \I^{(t_1+1)}\subset \I^{(t_1)}=...= \I^{(0)}= \I_{L/K}= \Gal(L/K); \quad d(\varphi) = t_2
\]

The allowed depths are $3,5,7,9,\ldots$.

(In the above, $\I^{(0)}= \I_{L/K}$)

\begin{thm} Let $L/K$ be a biquadratic extension, let $\phi$ be the Langlands parameter (\ref{phi}).   For every $\pi \in
\Pi_{\phi}(\SL_2(K))$ and $\pi \in \Pi_{\phi}(\SL_1(D))$  there is an equality of depths:
\[
d(\pi) = d(\phi).
\]
The depth of each element in the $L$-packet $\Pi_{\phi}$ 
is given by the largest break in the ramification of the
Galois group $\Gal(L/K)$.   
The allowed depths are $1,3,5,7, \ldots$ except in Case 2.2, when the allowed depths are $3,5,7, \ldots$.   
\end{thm}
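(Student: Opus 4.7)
The proof assembles three ingredients already in place.

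\emph{Step 1: equality of depths within the $L$-packet.} The equality $d(\pi) = d(\phi)$ for every $\pi \in \Pi_{\phi}(\SL_2(K))$ follows directly from the general result (\ref{D}) quoted in the introduction from \cite{ABPS1}. The same result applies to the inner form $\SL_1(D)$, giving $d(\pi) = d(\phi)$ for the unique $\pi \in \Pi_{\phi}(\SL_1(D))$ as well. Since both sides of the desired equality are invariants of the $L$-packet, no further work is required here.

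\emph{Step 2: $d(\phi)$ is the largest ramification break.} By Lemma~\ref{ddd}, $d(\phi) = r - 1$ where $r$ is the least integer for which $\I^{(r)} = 1$. Consulting the case analysis immediately preceding the theorem, in every case the subgroup $\I^{(r)}$ becomes trivial exactly one step past the largest break of the upper ramification filtration of $\Gal(L/K)$, so $d(\phi)$ equals that largest break. Explicitly: in Case 1 the largest break is $t$, in Case 2.1 the unique break is $t$, and in Case 2.2 the largest break is $t_2$; each of these equals $d(\phi)$.

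\emph{Step 3: enumeration of allowed depths.} The allowed values of the largest ramification break are determined by the analysis of the $\F_2$-planes $W \subset K/\wp(K)$ in Section~4. By \cite[Proposition 17]{Da}, positive upper-numbering breaks for extensions of $K$ with $\mathrm{ch}(K)=2$ occur at odd integers, so the largest break is always an odd positive integer. In Cases 1 and 2.1 the largest break $t$ ranges over all odd positive integers, giving allowed depths $1,3,5,7,\ldots$. In Case 2.2, the constraint $0 < t_1 < t_2$ with both $t_1, t_2$ odd forces $t_2 \geq 3$, so the allowed depths are $3,5,7,9,\ldots$.

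\emph{Expected obstacle.} The substantive mathematics has already been placed in Lemma~\ref{ddd} and in the ramification analysis of Section~4; the only remaining delicate point is justifying that (\ref{D}) applies to the inner form $\SL_1(D)$ on equal footing with $\SL_2(K)$. This is covered by the version of the depth-preservation statement for inner forms in \cite{ABPS1}, which is the result we invoke in Step~1.
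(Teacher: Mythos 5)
Your proposal is correct and follows exactly the route the paper takes: the paper's one-line proof cites Lemma~\ref{ddd}, the preceding case computations, and Theorem~3.4 of \cite{ABPS1} (the depth-preservation result), which are precisely your Steps~2, 3, and 1 respectively. You have merely spelled out in more detail what the paper compresses into a single sentence.
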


\begin{proof}  This follows from Lemma (\ref{ddd}), the above computations, and Theorem 3.4 in \cite{ABPS1}.\end{proof}

  This contrasts with the case of $\SL_2(\Q_p)$ with $p>2$.  Here there is a unique biquadratic extension $L/K$, and a unique tamely ramified parameter $\phi : \Gal(L/K) \to \SO_3(\R)$ of depth zero.

 \subsection{Quadratic extensions}  Let $E/K$ be a quadratic extension. There are two kinds: the unramified one $E_0=K(\alpha_0)$ and countably many totally (and wildly) ramified $E =K(\alpha)$.
 
 \begin{thm}   For the unramified principal series $L$-packet $\{\pi_E^1, \pi_E^2\}$, we have
 \[
 d(\pi_E^1) = d(\pi_E^2) = -1.
 \]
 For the ramified principal series $L$-packet $\{\pi_E^1, \pi_E^2\}$, we have 
  \[
 d(\pi_E^1) = d(\pi_E^2) = n
 \]
 with $n = 1,2,3,4, \ldots$.    
 \end{thm}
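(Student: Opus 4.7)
The plan is to mimic the biquadratic argument given just above. The first step is to identify the Langlands parameter attached to the principal series $L$-packet $\{\pi_E^1,\pi_E^2\}$. By Theorem \ref{Casselman's th.}, combined with local class field theory and the Artin-Schreier pairing of Proposition \ref{A-S symbol induce quadratic character}, this packet consists of the two irreducible constituents of $\Ind_{\cB}^{\cG}(\chi_a)$, where $E=K(\alpha)$ with $\alpha^2-\alpha=a$; its L-parameter is the composite
\[
\phi_E : \W_K \twoheadrightarrow K^\times/K^{\times 2} \twoheadrightarrow \Gal(E/K) \hookrightarrow \PGL_2(\C).
\]
By Theorem 3.4 of \cite{ABPS1} one has $d(\pi_E^1)=d(\pi_E^2)=d(\phi_E)$, so the task reduces to computing $d(\phi_E)$ via the formula of Lemma \ref{ddd}.

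For the unramified extension $E_0=K(\alpha_0)$ the relative inertia subgroup $\I_{E_0/K}$ is already trivial, i.e.\ $\I^{(0)}=1$, so the formula $d(\phi)=r-1$ with $r$ the least integer for which $\I^{(r)}=1$ produces $d(\phi_{E_0})=-1$.

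For a wildly ramified quadratic extension $E=K(\alpha)$ the group $\Gal(E/K)\cong\Z/2\Z$ is cyclic of prime order and carries a unique positive upper ramification break $t$ (Proposition 14 of \cite{Da}). I would determine $t$ by applying the orthogonality relation (\ref{orthogonal}) to the $\F_2$-line $D=\langle a+\wp(K)\rangle\subset K/\wp(K)$ that classifies $E$ under Artin-Schreier: the image of $G_2^u$ in $\Gal(E/K)=G_2/D^\perp$ is trivial iff $G_2^u\subseteq D^\perp$, equivalently $D\subseteq V_{\lceil u\rceil-1}$, i.e.\ $a\in V_{\lceil u\rceil-1}$. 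Hence if $n$ is the unique index with $a\in V_n\setminus V_{n-1}$, then $t=n$, and Lemma \ref{ddd} yields $d(\phi_E)=n$.

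The main subtlety, and the point I expect to require the most care, is the range of values of $n$ that actually arise. Because the filtration on $K/\wp(K)$ collapses at consecutive indices, $V_{2i-1}=V_{2i}$, no element of $K/\wp(K)$ has effective index $n=2,4,6,\ldots$; genuine jumps occur only at $n=0,1,3,5,\ldots$, so the depths of the ramified constituents in fact range over the positive odd integers, a point that needs to be reconciled with the theorem's enumeration. To close the argument I would exhibit, for each admissible odd $n$, an explicit representative $a\in \varpi^{-n}\mathfrak{o}^\times$ whose class in $K/\wp(K)$ lies in $V_n\setminus V_{n-1}$, mirroring the construction of breaks in Section 4, Case 1, and thereby showing that every permitted depth is attained.
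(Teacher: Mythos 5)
Your proposal follows the same overall route as the paper: read off the Langlands parameter $\phi_E$ attached to the quadratic character $\chi_a$, invoke Theorem~3.4 of \cite{ABPS1} to reduce $d(\pi_E^i)$ to $d(\phi_E)$, and compute $d(\phi_E)$ from the breaks in the ramification filtration of $\Gal(E/K)$ via Lemma~\ref{ddd}. The unramified computation ($\I_{E_0/K}=1$, so $d=-1$) agrees with the paper's. For the ramified case the paper is content to observe that wild ramification forces $G^1\neq 1$, hence a single break $t\geq 1$, and then records $n=1,2,3,4,\ldots$; you instead pin the break down via the orthogonality relation~(\ref{orthogonal}) against the filtration $(V_i)$ on $K/\wp(K)$, which is a slightly more computational but equivalent path.

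The genuinely interesting part of your write-up is the final paragraph: you correctly observe that, since $V_{2i-1}=V_{2i}$ in the filtration~(\ref{Filtration K/P(K)}) and positive upper breaks for $G_2$ occur only at integers prime to $p$ (\cite[Proposition~17]{Da}, quoted verbatim in Section~4 of the paper), a ramified quadratic extension $E=K(\wp^{-1}(a))$ has its unique upper break at an \emph{odd} integer, so the attainable depths are $1,3,5,7,\ldots$ rather than all of $\Z_{\geq 1}$. This is not a defect of your argument; it is a defect in the theorem's stated enumeration. The paper's own Case~2 proof only establishes $t\geq 1$ and does not address parity, so the claim ``$n=1,2,3,4,\ldots$'' is inconsistent with the paper's own ramification-theoretic preliminaries, and with the parallel biquadratic statement which correctly lists the allowed depths as $1,3,5,7,\ldots$. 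Your sketch of exhibiting $a\in\varpi^{-n}\mathfrak{o}^\times$ with $n$ odd to realize every odd depth is the right way to show the odd values are all attained. In short: your proof is correct, it matches the paper's method, and it corrects the range of $n$ in the theorem statement.
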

 \begin{proof}  Case 1: $E_0/K$ unramified. Then, $f(E_0/K)=2$. In this case, we have $G_0=\{1\}$, and $G_0=G^0=\mathfrak{I}_{E_0/K}$. There is only one ramification break at $t=0$ and the filtration of $G= \Gal(E_0/K)$ in the upper numbering is
$$\{1\}=G^0\subset G^{-1}=G=\mathbb{Z}/2\mathbb{Z}.$$
The filtration on the relative inertia $\mathfrak{I}^{(t)}$ is
$$\{1\}=\mathfrak{I}_{L_0/K}\subset G=\mathbb{Z}/2\mathbb{Z}$$
with only one break at $t=0$.  Negative depth, as expected.

Case 2: $E/K$ is totally ramified. Then, $e(E/K)=2$, which is divisible by the residue degree, so the extension is wildly ramified. In this case, there is one break at some $t \geq 1$. This is because of wild ramification, since $G^1=\{1\}$ if and only if the extension is tamely ramified. The filtration of $G$ in the upper numbering is
$$\{1\}=G^{t+1}\subset G^t=...=G^0=G=\mathbb{Z}/2\mathbb{Z}$$

The filtration on the relative inertia $\mathfrak{I}^{(r)}$ is
$$\{1\}=\mathfrak{I}^{(t+1)}\subset \mathfrak{I}^{(t)}=...=G=\mathbb{Z}/2\mathbb{Z}$$
with only one break at $t \geq1$.
\end{proof}

    \section{A commutative triangle}

In this section we confirm part of the  geometric conjecture in \cite{ABPS} for $\SL_2(\mathbb{F}_q((\varpi)))$. We begin by recalling the underlying ideas of the conjecture.

Let $\cG$ be the group of $K$-points of a connected reductive group over a nonarchimedean local field $K$.   The Bernstein decomposition  provides us, inner alia,  with the following data: a canonical disjoint union
\[
\Irr(\cG) = \bigsqcup \Irr(\cG)^{\fs}
\]
and, for each $\fs \in \fB(\cG)$, a finite-to-one surjective map
\[
\Irr(\cG)^{\fs} \to T^{\fs}/W^{\fs}
\]

The geometric conjecture in \cite{ABPS} amounts to a refinement of these statements.   The refinement comprises the assertion that  we have a \emph{bijection}
\[
\Irr(\cG)^s \simeq (T^{\fs}\q W^{\fs})_2
\]
where $(T^{\fs}\q W^{\fs})_2$ is the \emph{extended quotient of the second kind} of the torus $T^{\fs}$ by the finite group $W^{\fs}$.   This bijection is subject to certain constraints, itemised in \cite{ABPS}.

We proceed to define the extended quotient of the second kind. Let $W$ be a finite group and let $X$ be a complex affine algebraic variety. Suppose that $W$ is acting on $X$ as automorphisms of $X$. Define
\[
\widetilde{X}_2:  =  \{(x,\tau) : \tau \in \Irr(W_x)\}.
\]
Then $W$ acts on $\widetilde{X}_2$:
\[
\alpha(x, \tau)=(\alpha \cdot x,\alpha_* \tau).
\]
\begin{defn}
The extended quotient of the second kind is defined as
\[
(X\q W)_2 :=\widetilde{X}_2/W.
\]
\end{defn}
Thus the extended quotient of the second kind is the ordinary quotient for the action of $W$ on $\widetilde{X}_2$.

We recall that $(G,T)$ are the complex dual groups of $(\cG,\cT)$, so that $G = \PSL_2(\C)$.      Let $\W_K$ denote the Weil group of $K$.
  If $\phi$ is an $L$-parameter
  \[
  \W_K \times \SL_2(\C) \to G
  \]
  then an \emph{enhanced Langlands parameter} is a pair $(\phi, \rho)$ where $\phi$ is a parameter and $\rho \in \Irr(S_{\phi})$.

\begin{thm} \label{Triangle}
Let $\cG = \SL_2(K)$ with $K = \F_q((\varpi)))$.   Let $\fs = [\cT,\chi]_G$ be a point in the Bernstein spectrum for the principal series of
$\cG$.    Let $\Irr(\cG)^{\fs}$ be the corresponding  Bernstein component in $\Irr(\cG)$. Then 
there is a commutative triangle of natural bijections
\[
\xymatrix{
& (T^{\fs}/\!/W^\fs)_2 \ar[dr]\ar[dl] & \\
\Irr(\mathcal{G})^\fs    \ar[rr] & & \fL(G)^\fs
}
\]
where  $\fL(G)^\fs$ denotes the equivalence classes of enhanced parameters attached to $\fs$.
\end{thm}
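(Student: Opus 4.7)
The plan is to establish the three edges of the triangle pairwise and then verify commutativity, using Casselman's reducibility theorem, the classification of principal-series $L$-packets from Section 3 and \cite{LR}, and the component groups $S_\phi$ of the relevant Langlands parameters. I would start by splitting on $W^\fs$. Since the Weyl group $W \cong \Z/2\Z$ acts on characters of $\cT \cong K^\times$ by $\chi \mapsto \chi^{-1}$ and $W^\fs$ is the subgroup stabilising $\chi$ modulo unramified twists, $W^\fs = \Z/2\Z$ iff $\chi^2$ is unramified, and $W^\fs = 1$ otherwise. In the easy case $W^\fs = 1$, the extended quotient and the ordinary quotient both collapse to $T^\fs \cong \C^\times$; Casselman's theorem forces $\pi(\chi\psi)$ to be irreducible for every unramified twist $\psi$ (since $(\chi\psi)^2 = \chi^2\psi^2$ remains ramified, precluding both $\chi\psi = |\cdot|^{\pm 1}$ and $\chi\psi$ quadratic); and every parameter in $\fL(G)^\fs$ has trivial component group, so the enhancement is unique. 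All three vertices are naturally $T^\fs$ and the triangle commutes tautologically.

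In the substantive case $W^\fs = \Z/2\Z$ the induced action on $T^\fs \cong \C^\times$ is $\psi \mapsto \psi^{-1}$, with fixed points $\{\pm 1\}$, and
\[
(T^\fs \q W^\fs)_2 \;=\; T^\fs/W^\fs \;\sqcup\; \{(1,\epsilon),\,(-1,\epsilon)\},
\]
where $\epsilon$ denotes the sign character of $W_{\pm 1} = W^\fs$. I would further split into a ramified-quadratic sub-case and the unramified sub-case $\chi = 1$. If $\chi$ is a non-trivial ramified quadratic character, Casselman's criterion locates reducibility precisely at the two fixed points $\psi = \pm 1$, producing at each a two-element $L$-packet as in Section 3; the matching Langlands parameters factor through the quadratic Galois group with $S_\phi \cong \Z/2\Z$, and the three bijections preserve the projection to $T^\fs/W^\fs$, with $(x,\epsilon)$ matched to the second $L$-packet constituent at $x$ and to the non-trivial enhancement above $x$. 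If instead $\chi = 1$, reducibility occurs at the fixed point $\psi = -1$ (the unramified quadratic twist, giving an $L$-packet of size two) and at the non-fixed $W^\fs$-orbit $\{|\cdot|, |\cdot|^{-1}\}$ (giving the trivial and the Steinberg representation), while $\pi(1)$ at the other fixed point $\psi = 1$ is irreducible.

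The bijection with the extended quotient in this last case cannot respect the Bernstein projection: $(-1,\epsilon)$ is paired with the second $L$-packet constituent at $\psi = -1$, while $(1,\epsilon)$ is paired with the Steinberg representation. On the Galois side, the unramified quadratic parameter has $S_\phi \cong \Z/2\Z$ whose two enhancements label the $L$-packet at $\psi = -1$, while the trivial and the Steinberg carry distinct $L$-parameters inside the block, the Steinberg's having non-trivial Deligne-$\SL_2(\C)$-part. Commutativity is then checked pointwise: at generic orbits all three bijections reduce to the classical local Langlands correspondence for an irreducible principal series, and at the extra points they agree by the explicit dictionary just given.

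The main obstacle is precisely the identification, in the unramified sub-case, of the extra point $(1,\epsilon)$ of the extended quotient with the Steinberg representation, which on the Bernstein side lies over the orbit $\{|\cdot|, |\cdot|^{-1}\}$ rather than over $\{1\}$. This ``shift'' is natural from the Galois side, where the Steinberg parameter has non-trivial Deligne-$\SL_2(\C)$-part that effectively collapses its Satake parameter to $1$, but on the representation-theoretic side it requires the explicit local Langlands correspondence for $\SL_2$ together with an Aubert-type involution pairing the trivial and the Steinberg. Once this dictionary is fixed, the remaining verifications are routine counting.
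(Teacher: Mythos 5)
Your proposal is correct and follows essentially the same route as the paper's proof: parametrize $T^\fs$ by unramified twists of a base character, use Casselman's reducibility criterion to locate the reducible points, compute the component groups $S_\phi$ at those points, and match enhanced parameters to constituents of the reducible principal series. You organize the case analysis by first splitting on $W^\fs$ and then on the type of $\chi$, whereas the paper runs through $\lambda \in T^\fs$ directly (generic $\lambda^2 \neq 1$, nontrivial quadratic $\lambda$, $\lambda = 1$), but the content coincides; you also correctly identify the one genuine subtlety — the extra point $(1,\epsilon)$ over the trivial character must be sent to Steinberg, whose Bernstein support sits over the orbit $\{|\cdot|^{\pm 1}\}$ and whose parameter $\phi_0$ has nontrivial Deligne $\SL_2$-part with $S_{\phi_0}=1$ — which is exactly how the paper handles its Case 3.
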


\begin{proof}    We recall that $T^\fs = \{\psi\chi : \psi \in \Psi(\cT)\}$ where $\Psi(\cT)$ is the group of all unramified quasicharacters of $\cT$.  
With $\lambda \in T^\fs$, we define the parameter $\phi(\lambda)$ as follows:
\[
\phi(\lambda): W_K \times \SL_2(\C)  \to \PSL_2(\C), \quad (w \Phi_K^n, Y)  \mapsto \left( \begin{array}{cc}\lambda(\varpi)^n & 0\\
0 & 1 \end{array} \right)_*
\]
where $A_*$ is the image in $\PSL_2(\C)$ of $A \in \SL_2(\C)$, $Y \in \SL_2(\C)$, $w \in I_K$ the inertia group, and $\Phi_K$ is a geometric Frobenius. Define, as in \S 3,
\[
\pi(\lambda): = \Ind_\cB^\cG (\lambda).
\]

\textbf{Case 1}.   $\lambda^2 \neq 1$.   
Send the pair $(\lambda, 1) \in T^{\fs} \q W^{\fs}$ to $\pi(\lambda) \in \Irr(\cG)^\fs$ (via the left slanted arrow) and to $\phi(\lambda) \in \fL(G)^\fs$ ( via the right slanted arrow).

\textbf{Case 2}.  Let $\lambda^2 = 1, \lambda \neq 1$. 
Let $\phi = \phi(\lambda)$.   
To compute $S_{\phi}$, let $1,w$ be representatives of the Weyl group $W = W(G)$.   Then we have
\[
C_G(\im\,\phi) = T \sqcup wT
\]
So $\phi$ is a non-discrete parameter, and we have
\[
S_{\phi} \simeq \Z/2\Z.
\]

We have two enhanced parameters, namely $(\phi, 1)$ and $(\phi, \epsilon)$ where $\epsilon$ is the nontrivial character of $\Z/2\Z$.



Since $\lambda^2 = 1$, there is a point of reducibility.   We send \[
(\lambda, 1) \mapsto \pi(\lambda)^+, \; (\lambda, \epsilon) \mapsto \pi(\lambda)^-
\]
 via the left slanted arrow, and \[
 (\lambda, 1) \mapsto (\phi(\lambda),1), \; (\lambda, \epsilon) \mapsto (\phi(\lambda), \epsilon)
 \]
  via the right slanted arrow.  Note that this \emph{includes} the case when $\lambda$ is the unramified quadratic character of $K^{\times}$.





\textbf{Case 3}.   Let $\lambda  = 1$.
The \emph{principal parameter}
\[
\phi_0 : \W_K \times  \SL_2(\C)  \to \SL_2(\C) \to \PSL(2,\C).
\]
is a discrete parameter for which $S_{\phi_0} = 1$.
In the local Langlands correspondence for $\cG$, the enhanced parameter $(\phi_0, 1)$ corresponds to the Steinberg representation $\St$ of $\SL_2(K)$.
Note also that, when $\phi = \phi(1)$, we have $S_{\phi} = 1$.   
  We send
  \[
  (1,1) \mapsto \pi(1), \quad (1,\epsilon) \mapsto \St
   \]
   via the left slanted arrow
   and
   \[
   (1,1) \mapsto (\phi(1),1), \quad (1,\epsilon) \mapsto (\phi_0,1)
   \]
   via the right slanted arrow.  This establishes that the geometric conjecture in \cite{ABPS} is valid for $\Irr(\cG)^{\fs}$. 
   \end{proof}

Let $L/K$ be a quadratic extension of $K$.   Let $\lambda$ be the quadratic character which is trivial on $N_{L/K}L^\times$. Then $\lambda$ factors through $\Gal(L/K) \simeq K^\times/ N_{L/K}L^\times \simeq \Z/2\Z$ and $\phi(\lambda)$ factors through $\Gal(L/K) \times \SL_2(\C)$.  The parameters $\phi(\lambda)$ serve as  parameters for the
$L$-packets in the principal series of $\SL_2(K)$.

It follows from \S3 that, when $K = \F_q((\varpi))$,  there are countably many $L$-packets in the principal series of $\SL_2(K)$.

\subsection{The tempered dual}
If we insist, in the definition of $T^\fs$,  that the unramified character $\psi$ shall be unitary, then we obtain a copy $\T^\fs$ of the circle $\T$.
We then obtain a compact version of the commutative triangle, in which the tempered dual $\Irr^\temp(\cG)^\fs$ determined by $\fs$ occurs on the left, and the bounded enhanced parameters $\fL^b(G)^\fs$ determined by $\fs$ occur on the right.
We now isolate the bijective map
\begin{align}\label{map}
(\T^{\fs}/\!/W^\fs)_2 \to
\Irr^\temp(\mathcal{G})^\fs  
\end{align}
and restrict ourselves to the case where $\T^\fs$ contains two \emph{ramified} quadratic characters.
Let  $\T: = \{ z \in \C : |z| = 1\}, \; W: = \Z/2\Z$.       We then have $T^\fs = \T, \; W^\fs = W$  and the generator of $W$ acts on  $\T$ sending $z$ to $z^{-1}$.  

The left-hand-side and the right-hand-side of the map (\ref{map}) each has its own natural topology, as we proceed to explain.

The topology on $(\T\q W)_2$ comes about as follows.  Let
\[
\Prim ( C(\T) \rtimes W)
\]
  denote the primitive ideal space of the noncommutative
$C^*$-algebra $C(\T) \rtimes W$.   By the classical Mackey theory for semidirect products, we have a canonical bijection
\begin{align}\label{JJJ}
\Prim ( C(\T) \rtimes W) \simeq (\T\q W)_2.
\end{align}
The primitive ideal space on the left-hand side of (\ref{JJJ}) admits the Jacobson topology.   So the right-hand side of (\ref{JJJ}) acquires, by transport of structure, a compact non-Hausdorff topology.  The  following picture is intended to portray this topology.  

\bigskip

\begin{tikzpicture}\label{pic}
\hskip 4.0cm
\draw (3,0) arc (0:180:1cm);
\draw (0.9,0) circle (0.08cm);
\fill[black] (0.9,0) circle (0.08cm);
\draw (1.1,0) circle (0.08cm);
\fill[black] (1.1,0) circle (0.08cm);
\draw (2.9,0) circle (0.08cm);
\fill[black] (2.9,0) circle (0.08cm);
\draw (3.1,0) circle (0.08cm);
\fill[black] (3.1,0) circle (0.08cm);
\end{tikzpicture}

\bigskip

The reduced $C^*$-algebra of $\cG$  is liminal, and its primitive ideal space is in canonical bijection with the tempered dual of $\cG$. Transporting the Jacobson topology on the primitive ideal space, we obtain a locally compact topology on the tempered dual of $\cG$, see \cite[3.1.1, 4.4.1, 18.3.2]{Dix}.   This makes 
$\Irr^{\temp}(\cG)^\fs$ into a compact space, in the induced topology.

We conjecture that these two topologies make (\ref{map}) into a homeomorphism. This is a strengthening of the geometric conjecture \cite{ABPS}.  In that case, the double-points in the picture arise precisely when the corresponding (parabolically) induced representation has two irreducible constituents.   This conjecture is true for $\SL_2(\Q_p)$ with $p > 2$, see \cite[Lemma 1]{P}. While in conjectural mode, we mention the following point: the standard Borel subgroup of $\SL_2(K)$ admits countably many ramified quadratic characters and so, following the construction  in \cite{ChP}, the geometric conjecture predicts that tetrahedra of reducibility will occur countably many times; however, the $R$-group machinery is not, to our knowledge, available in positive characteristic, so this remains conjectural.

\end{document}